\newtheorem{theorem}{Theorem}[section]
\newtheorem{corollary}[theorem]{Corollary}
\newtheorem{example}[theorem]{Example}
\newtheorem{definition}[theorem]{Definition}
\newtheorem{remark}[theorem]{Remark}
\newcommand\NCP{\mathcal{NC}}
\newcommand\ee{\varepsilon}
\newcommand\EE{\mathbb{E}}
\newcommand\FF{\mathbb{F}}
\newcommand\CC{\mathbb{C}}
\newcommand\RR{\mathbb{R}}
\newcommand\cA{\mathcal{A}}
\newcommand\cB{\mathcal{B}}
\newcommand\SP{\mathcal{P}}
\newcommand\IP{\mathcal{I}}
\begin{document}

\title{Non-Commutative Probability Theory for Topological Data Analysis}

\begin{abstract}
Recent developments  \cite{Par15} have found unexpected connections between non-commutative probability theory and algebraic topology. In particular, Boolean cumulants functionals seem to be important for describing structure morphisms of homotopy operadic algebras \cite{DPT15a}.

We provide new elementary examples which show a connection between non-commutative probability theory and algebraic topology, based on spectral graph theory. These observations are important for bringing new ideas from non-commutative probability into TDA and stochastic topology, and in the opposite direction.
\end{abstract}
\author{Carlos Vargas (CONACyT-CIMAT)}

\maketitle

\section{Introduction}

One of the main concepts in non-commutative probability theory is free independence. Free independence and free probability theory were initially motivated by problems on the classification theory of von-Neumann algebras \cite{Voi85,Voi96, VDN92} but have evolved into a rich field with connections to several areas in mathematics, such as representation theory \cite{Bia98, Col03, CS06}, combinatorics \cite{Spe94, NS06, Spe98} and spectral graph theory \cite{ALS07, Oba08}.

The first of these important links was found by Voiculescu in \cite{Voi91}, where he showed that free independence is not just an abstract notion in the framework of infinite dimensional operator algebras, but a concretely realizable relation, leading to groundbreaking multivariate generalizations of the pioneering theorems of Wigner \cite{Wig58} and Marchenko-Pastur\footnote{To be strict here, we mean multivariate generalizations for the weak versions of these theorems.} \cite{MP67} on asymptotic random matrix theory. 

Free independence describes exactly the asymptotic global collective behavior of the different matrices involved in the models of Wigner and Marchenko-Pastur. The more sophisticated notion of an operator-valued probability space $(\cA,\cB,\FF)$ and the corresponding  $\cB$-free independence \cite{Voi95, Shl96} allowed to treat more recent matrix models and combinations of these. The developments of the theory (in particular, the works on analytic subordination \cite{Bia98a, BB07} and algebraic linearization \cite{HT05,And13}) have led to a powerful toolbox for computing asymptotic global eigenvalue distributions \cite{BSTV14, BMS15, ANV16, BSS15}.

A combinatorial approach to non-commutative independence started with the works of Speicher on the cumulant characterizations of free independence \cite{Spe94} (see also \cite{NS06}). The combinatorial picture of free probability offered a new perspective which led to the definition of new notions of independence \cite{SW97, Mur01} and classification results on these \cite{Spe97, Mur03}. In particular, the approach in terms of co-products on the different categories of algebras, followed by \cite{BGS02}, explains the deep parallelism between the different non-commutative probability theories. Some years later, simple realizations of independent random variables and their convolutions were found by the means of spectral graph theory \cite{ABO04, Oba08, ALS07}.

Recent works \cite{DPT15a, DPT15b, Par15} have found unexpected connections between cumulant functionals in non-commutative probability and morphisms of homotopy operadic algebras.

The aim of this work is to present an alternative way to consider ``topological" non-commutative random variables. We show that topological spaces may be considered as random variables in such a way that their non-commutative distribution encodes its topological properties. 

For this, it will be important to consider the operator-valued probability space where $\cA=M_N(\CC)$ is the algebra of complex deterministic matrices $\cB=\langle P_0,\dots,P_d \rangle\subset \cA$ is a $(d+1)$-dimensional algebra, generated by self-adjoint, pairwise orthogonal (and hence commutative) projections and $\FF:\cA\to \cB$ is the usual conditional expectation compatible with the normalized trace $\frac{1}{N}\mathrm{Tr}$. For these situations, called \emph{rectangular (NC)-probability spaces}, most tools, both combinatorial and analytical, can be lifted directly from the scalar-valued level to the $\cB$-valued level. 

Despite of their simplicity, rectangular spaces have already been quite useful for sorting out distributions in random matrix theory. 

For example, the random matrix models for wireless communications usually involve rectangular matrices of different sizes.  However, Voiculescu's original results on asymptotic free independence do not hold unless the matrices involved in the models are square and growing at the same rates. As observed by Benaych-Georges in \cite{BG09, BG09a}, this problem can be fixed by considering free independence with respect to a suitable rectangular space, where the traces of the projections are proportional to the different sizes of the matrices in consideration (see, for example Chapter 6 of \cite{CD11}, for a series of examples, and \cite{SV12,Var16}, for their treatment using free probability). 

As another example, in \cite{ANV16}, the authors study block-modifications of unitarily invariant random matrix ensembles. Considering rectangular spaces allowed uniform treatments for several instances of this problem \cite{BN12a, FS13, JLN14, JLN15}. 

For this paper, the rectangular distributions will help us in two situations: 
\begin{itemize}
\item They will allow us to connect non-commutative probability and algebraic topology, by granting us direct access to the chain groups of different dimensions for a simplicial complex $X$ (and, as a consequence, to its Betti numbers).
\item They allow us to construct basic objects in statistics (such as histograms). 
\end{itemize}
The combination of these two observations gives some more heuristics on why the Betti numbers may be used in statistics (as in TDA), which we will further elaborate in several directions. In particular, we recall how cycles can be filtered according to local topological properties using Boolean cumulants (Section \ref{subsection:graphprod}), we show that boundary matrices are natural candidates for non-commutative random variables, with distributions encoding Betti numbers (see Section \ref{section:NCDSC}), we discuss similarities between crucial objects in TDA and operator-valued free probability (Section \ref{subsection:OVFP}), and propose new models with superconvergence in TDA (see Section \ref{subs:repulsionTDA}).

For our purposes, we rely heavily on the transparent realizations of classical, Boolean, monotone and free convolutions of random variables as graphs products (see \cite{ABO04, HO07, ALS07,Oba08}). Graphs can be thought as the simplest examples of both topological spaces and non-commutative random variables.
The usual notions of non-commutative distributions for graphs are weighted versions of the (discrete) spectral probability distributions of their adjacency matrices.

We show that, for a simplicial complex $X=\bigcup_{j\leq d} \bigcup_{i\leq n_j}\sigma^{(j)}_i$, a canonical rectangular probability space may be considered $(\cA,\cB):=(M_N(\CC),\langle P_0,\dots, P_d\rangle)$ and the $\cB$-distributions of the incidence $I(X)\in \cA$ and boundary $J(X)\in \cA$ matrices are natural candidates for its algebraic distribution. This because, in particular, for the case $d=1$, where $X$ is a graph, $I(X)^2=J(X)^2=0$ and hence their $\cB$-valued distributions are accessible and almost include the usual distribution of $G$ in terms of the adjacency matrix.

For general $d\geq 2$, $I(X)^2$ is no longer zero. However, the fundamental topological-algebraic equations $\partial_{i-1}\partial_{i}=0$ of the boundary operators translate into the desired equation $J(X)^2=0$. The $\cB$-distribution of $J(X)$ is easier to compute (it depends only on the positive orthogonal operators $JJ^*$ and $J^*J$) and organizes topological information about $X$ in a better way.

In particular the element $\mathcal L:=JJ^*+J^*J$ is a diagonal, positive non-commutative random variable, with analytic distribution completely determined by a $(d+1)$-tuple of probability measures on the positive real line $\mu=(\mu_0,\mu_1,\dots,\mu_d)$. It can be shown that the weight of the atom at zero of $\mu_i$ is $n_i^{-1}\beta_i(X)$, where $\beta_i(X)$ its i-th Betti number of $X$. 

These results are not new. They follow from Eckmann's \cite{Eck44} study of the spectra of the so-called higher-order combinatorial Laplacians $\mathcal L_i=P_i \mathcal L P_i$. In \cite{Fri98}, Friedman designed powerful algorithms to compute such spectra, based on Hodge Theory. In \cite{HJ13}, the authors study the effects on these spectra, of weight functions and operations of simplicial complexes (e.g. joins, wedge sums, etc). 

The novelty of our work is to interpret $J$ (or its dual) as a concrete example of a ``topological" non-commutative random variable, yielding a direct connection between non-commutative distributions and the seemingly unrelated, Betti numbers. 

This paper should then serve as a motivation to follow the advise in \cite{Par15} of developing operator-valued, homotopy probability theory. 
Here we point out that, in some sense, one must consider operator-valued distributions to provide basic examples of topological random variables.  Most scalar-valued topological invariants, such as the Euler characteristic, are typically calculated in practice in terms of the number of faces $(n_i)_{i\geq d}$ or the Betti numbers $(\beta_i)_{i\geq d}$, so the rectangular structure of the space seems hard to escape.

From the point of view of probability and statistics, there has been an increasing interest during the last decades on the statistical use of Betti numbers  and algorithms for their effective computation, in part, due to the surge of topological data analysis (see \cite{ELZ02, CZCG04}, and the surveys \cite{Car09}, \cite{Kah14} on TDA and random simplicial complexes).

The main idea of TDA is to provide new ways to distinguish statistically from different data sets (or point clouds) in metric spaces. In practice these data sets come from measurements of complicated objects of quite different natures (e.g. 3D-coordinates of forests of brain arteries or precolonial Mexican masks, protein arrangements, collections of chunks of DNA, etc.). For each point cloud, one should construct a process $\mathbb X_i:=(X_t^{(i)})_t$ of topological spaces  $a\leq t\leq b$. For example, \v{C}ech complexes or Vietoris-Ripps complexes are popular choices. 

The idea of considering such complexes is that they will allow us to encode the \emph{shape of the data} through their \emph{topological invariants at different scales}. For example, one possibility is to consider the barcodes of Betti numbers (or Betti curves):
\begin{eqnarray*}
(\dim\circ H(X_t^{(i)}))_{a\leq t\leq b}&=&(\dim\circ (H_0(X_t^{(i)}),H_1(X_t^{(i)}), \dots ,H_d(X_t^{(i)})))_{a\leq t\leq b}\\
&=&(\beta_0(X_t^{(i)}),\beta_1(X_t^{(i)}), \dots ,\beta_d(X_t^{(i)}))_{a\leq t\leq b},
\end{eqnarray*}
where $H(X):=(H_i(X))_{i\leq d}$ is the homology functor. The complexes considered in TDA are usually very large and hence effective methods in computational algebraic topology need to be invoked. For instance, for the case of the Betti numbers, one possibility is to sacrifice some additional topological information, by computing homology with coefficients in the field of two elements $\FF_2$, in order to gain access to algorithmic methods for sparse matrices to compute them.

Although the Betti curves clearly capture important topological features of the space, one is ultimately interested in a topological summary $T(\mathbb X_i)$, that should not only preserve these features, but \emph{use them} to cluster the data sets in a way that we may distinguish between different kinds (e.g. DNA chunks from sick vs control individuals, gender of the individual with a given forests of brain arteries, origin of a precolonial mask, etc.). In other words, $T(\mathbb X_i)$ should work as a \emph{statistic}. 

The main technical difficulty here is that the space where $T(\mathbb X_i)$ takes values is not exactly a textbook example in statistics, and thus, extending the statistical toolbox to such complicated situations is an active area of research and a core problem in TDA.

The most important examples of statistics in TDA are the persistence diagrams, which favor those topological features that are present during longer time intervals, whereas the brief topological features are weighted out or ignored.

Recently, Patel \cite{Pat16} showed that quite general persistence diagrams are obtained by performing M\"obius inversions of homology-group valued functions with respect to the poset of persistence diagrams $(\mathrm{Dgm},\subseteq)$, consisting of multi-sets of half-open intervals $[p,q)$ with $p<q$ and half-infinite intervals $[p,\infty)$, with the usual partial order of contention. Patel's result is framed in Bubenik's categorical approach to persistence diagrams \cite{BS14}.  

We should at this point remind the reader that a M\"obius inversion (w.r.t. the subsets $\IP, \NCP$ of set partitions $\SP$) is at the backbone of the different non-commutative probability theories. We intentionally split the Betti curves as $\beta=\dim \circ H$ to highlight a remarkable similarity with situations often encountered when dealing with random matrix models by the means of free probability. 

The topological space $X_t$ is thought in practice as a large random sparse matrix, and $H$ is an array of projections. The analogy is given by replacing $\dim$ with $\frac{1}{N}\mathrm{Tr}$ and $H$ with some conditional expectation, depending on the matrix model, where the actual computations and the M\"obius inversions (cumulants) take place. After applying $\dim$ (resp. $\frac{1}{N}\mathrm{Tr}$), one obtains useful statistical functions, such as persistence landscapes \cite{Bub15} (resp. the desired scalar-valued Cauchy-Stieltjes transform).

In addition, the poset of set partitions models compatible conditional expectations and the interval partitions (which govern Boolean probability) are naturally relevant in order to construct useful histograms (which can be defined in terms of conditions expectations to rectangular spaces). Disconnected bins, which may be seen as corresponding to non-interval partitions, provide poor geometrical information about the outcome of an experiment. The combinatorial similarities between Betti numbers and Boolean cumulants suggest that a higher dimensional notion of Boolean cumulants may be defined (\cite{GMV17}). 

Thus, Betti numbers can be obtained directly from the non-commutative distribution of a simplicial complex, viewed as a non-commutative random variable, and, by Patel's result, persistence diagrams are obtained from them by M\"obius inversions. We conclude that persistence diagrams should be thought as cumulants. Furthermore, the Betti numbers are just some small piece of information contained in the non-commutative distribution of $J$ (i.e. the weights at zero of the analytic distribution of $\mathcal L$). A more delicate analysis of the spectra of $\mathcal L_i$, (as performed in \cite{HJ13}) could probably be used for constructing more general statistics in TDA.

We assume that the reader is familiar with basic concepts in algebraic topology. Apart from the introduction, the paper is organized as follows: 

In Section 2 we introduce the main definitions and main examples from non-commutative probability. We define the different notions of independence and their characterizations on terms of cumulants. In Section 3 we present realizations of independent random variables in terms of graphs, their products, adjacency matrices and spectral measures. 

This motivates our main definitions in Section 4 of possible algebraic distributions for simplicial complexes. We emphasize on the distribution given by the boundary matrix $J(X)$. We show that important topological invariants are extracted from a natural operator-valued non-commutative distribution of $J(X)$ and that, for the case of graphs, the new notion of distribution almost coincides (or, more precisely, almost includes) the previous one, defined in terms of the adjacency matrix. 

Section 5 contains a very brief discussion introduction to TDA and its potential connections with non-commutative probability theory. 

Section 6 gives some basic definitions and results on large random matrix theory and a quick insight on the algorithmic aspects of operator-valued free probability. The random matrix models are also used to produce toy models with repulsion in Section 7, where we point out directions for future work. 

\subsection*{Acknowledgments}
I am grateful to M. Nakamura, V. Perez-Abreu, R. Biscay and specially to F. Reveles, for their 2016 course ``Probability and Statistical Inference in TDA", and to the organizers and lecturers of the 2nd and 3rd Schools on TDA (UNAM Juriquilla, Quer\'etaro 2015 and ABACUS Centre, Estado de M\'exico 2017), in particular, to P. Bubenik, who taught courses in both events and informed me about the reference \cite{Pat16}). All these courses allowed me to gain some insight into algebraic topology and TDA. 

I also appreciate encouraging discussions with O. Arizmendi, T. Gaxiola, J. Cervantes, R. Speicher, R. Friedrich, A. Rieser and thank T. Hasebe and O. Arizmendi for respectively informing me about homotopy probability theory and higher-order combinatorial Laplacians, which led me to the references \cite{DPT15a, DPT15b, Par15} and \cite{Eck44,HJ13}. I deeply thank G. Flores and Y. Hernandez for producing some of the figures included in this paper.

\section{Algebraic probability spaces, non-commutative independence and cumulants}

The notion of a non-commutative probability space (NCPS) allows to consider classical random variables and deterministic matrices in the same framework. The key idea is to abstract algebraically the fundamental notions of probability theory, in terms of the expectation (and, more generally, conditional expectations). 

Voiculescu came up with the notion of free independence, which allowed him to import several ideas from classical probability into a parallel, non-commutative framework, sitting inside the spectral theory of operator algebras. Free independence was later shown to be crucial for a more conceptual understanding of the global behavior of eigenvalues of large random matrices. 

From the combinatorial point of view, a notion of independence can be seen as a rule for factorizing mixed moments. The different notions of non-commutative independence are classified very neatly in terms of cumulant functionals with respects to different posets of partitions. 

\subsection{Main definitions and basic examples}

\begin{definition}
(1). A NCPS is a pair $(\cA,\tau)$, where $\cA$ is a unital $*$-algebra and $\tau:\cA\to\CC$ is a unital positive linear functional. The functional $\tau$ should be understood as playing the role of the expectation. In this spirit, we call $a_1,a_2,\dots, a_m\in \mathcal A$ \emph{non-commutative random variables} and any expression of the form $\tau(a_{i(1)}^{\varepsilon_1}a_{i(2)}^{\varepsilon_2}\dots a_{i(k)}^{\varepsilon_k})$, for $k\geq 1$, $1\leq i(1),\dots,i(k)\leq m$, and  $\varepsilon=(\varepsilon_1,\dots,\varepsilon_k)\in \{1,*\}^k$ is called a \emph{mixed moment} of $(a_1,\dots, a_k)$.

Two tuples of random variables $(a_1,\dots,a_k)\in\cA_1^k, (b_1,\dots,b_k)\in\cA_2^k$ in (possibly different) non-commutative probability spaces $(\cA_i,\tau_i)$ have the same \emph{(algebraic) distribution} if all the $*$-moments coincide, that is $$\tau_1(a_{i(1)}^{\varepsilon_1}a_{i(2)}^{\varepsilon_2}\dots a_{i(k)}^{\varepsilon_k})=\tau_2(b_{i(1)}^{\varepsilon_1}b_{i(2)}^{\varepsilon_2}\dots b_{i(k)}^{\varepsilon_k}),$$ for all $k\geq 1$, $1\leq i(1),\dots,i(k)\leq m$, and  $\varepsilon\in \{1,*\}^k$.

A non-commutative random variable $a\in \cA$ may be of special type: 
\begin{itemize}
\item Positive: $a=bb^*$ for some $b\in \cA$
\item Self-adjoint: $a=a^*$
\item Unitary: $a^*=a^{-1}$
\item Normal: $aa^*=a^*a$.
\end{itemize} 

For these kind of variables, the algebraic non-commutative distribution of a \emph{single} random variable $a\in \cA$ may be simplified and encoded into a probability measure, supported, respectively, on the positive real line, on the real line, on the unit circle and on the complex plane.

(2). More generally, an operator-valued probability space (OVPS, or $\cB$-probability space) is a triple $(\cA,\cB,\FF)$, where $1_{\cA}\in\cB\subseteq\cA$ is a sub-algebra of $\cA$ containing the unit and $\FF:\cA\to \cB$ is a conditional expectation, that is, a $\cB$-linear map satisfying, for all $a\in \cA, b,b'\in \cB$:
$$\FF(bab')=b\FF(a)b', \quad \FF(1_{\cA})=1_{\cA}.$$  Observe that this implies that $\FF(b)=b$ for all $b\in \cB$. We say that two operator-valued structures $(\cA,\cB_1, \FF_1), (\cA,\cB_2, \FF_2)$ on the same space are compatible if $\cB_1\subseteq \cB_2$ and $\FF_1\circ \FF_2=\FF_1$.
\end{definition}

The main examples are algebras of random variables and matrices.

\begin{example}\label{ex:ovps}
(1). In the framework of a classical probability space $(\Omega, \mathcal F, \mathbb P)$, we may consider $\cA:=\cA_{\mathcal F}$ to be the algebra of $\mathcal F$-measurable complex-valued random variables with compact support and the usual expectation $$\tau:=\EE:X\mapsto \int_{\Omega}X(\omega)\mathrm{d}\mathbb P(\omega),$$ and adjoint given by the complex conjugate $X^*:=\bar X$.

Furthermore, any sub-$\sigma$-algebra of $\mathcal H \subseteq\mathcal F$ induces a conditional expectation  $\EE_{\mathcal H}:\cA_{\mathcal F}\to \cA_{\mathcal H}$. Indeed for any $X\in \cA_{\mathcal F}$, it is well-known that there exist a unique $\mathcal H$-measurable random variable $\EE_{\mathcal H}(X)$ such that for any $B\in \mathcal H$ we have $$\int_B X(\omega)d\mathbb P(\omega)=\int_B \EE_{\mathcal H}(X)(\omega)\mathrm{d}\mathbb P(\omega).$$

The subset relation of $\sigma$-algebras $\mathcal H_1 \subseteq \mathcal H_2$ yields the compatibility relation at the level of conditional expectations $\EE_{\mathcal H_1}\circ \EE_{\mathcal H_2}=\EE_{\mathcal H_1}$. In particular, for the smallest $\sigma$-algebra $\mathcal H_0=\{\emptyset, \Omega\}$ we obtain the usual expectation, whereas $\mathcal H=\mathcal F$ gives the identity. If $\Omega$ is finite, the collection of conditional expectations on $\Omega$ is in one-to-one correspondence with the poset of set partitions $\SP(|\Omega|)$ (see Def. \ref{def:partitions}).

(2). The algebra $\cA=M_n(\CC)$ of deterministic matrices, together with the normalized trace $\tau=\frac{1}{n}\mathrm{Tr}$ is a NCPS with adjoint given by the hermitian transpose. 

Before proceeding further, note that the symmetric Bernoulli random variable $X$ in the context of the first example $(\cA_{ \mathcal F},\EE)$, (i.e. a fair coin with outcomes $\pm1$) and the matrix $$B=\left(\begin{array}{cc}
0&1\\
1&0
\end{array}\right)\in (M_2(\CC),\frac{1}{2}\mathrm{Tr}),$$ have the same algebraic distribution. 

Indeed, $X=\bar X$, $\EE(X)=0$ and $X^2=1_{\cA_{ \mathcal F}}$, and similarly, $B=B^*$, $\frac{1}{2}\mathrm{Tr}(B)=0$ and $B^2=I_2=1_{M_2(\CC)}$. Hence
\begin{eqnarray}
\tau_1(X^{\varepsilon_1}X^{\varepsilon_2}\dots X^{\varepsilon_m})&=&\tau_1(X^m)\\
&=&\tau_2(B^m)\\
&=&\tau_2(B^{\varepsilon_1}B^{\varepsilon_2}\dots B^{\varepsilon_m}),
\end{eqnarray}
for all  $\varepsilon=(\varepsilon_1,\dots,\varepsilon_m)\in \{1,*\}^m$  and thus, all moments coincide.

Similarly, the unitary random variable $u_k\in \cA_{ \mathcal F}$ with uniform weights on the $k$-roots of $1$ has the same distribution as the unitary deterministic matrix  $U_k\in M_k(\CC)$ with ones above the diagonal. Their distribution is characterized by the moments, which yield, for $\varepsilon=(\varepsilon_1,\dots,\varepsilon_m)\in \{1,-1\}^k$ 
$$\tau(u_k^{\varepsilon_1}u_k^{\varepsilon_2}\dots u_k^{\varepsilon_m})=\left\{\begin{array}{ll}
1&\sum_i^m \varepsilon_i=0 \mod k\\
0& \text{otherwise.}
\end{array}\right.$$

As for general non-commutative random variables, a matrix $A\in M_n(\CC)$ is normal if $AA^*=A^*A$. Normality is equivalent to the geometric condition that $A$ and $A^*$ are simultaneously diagonalizable (i.e. $A=U\Lambda U^*$ and $A^*=U\Lambda^* U^*$) for some unitary matrix $U\in M_n(\CC)$ and a diagonal matrix $\Lambda\in M_n(\CC)$ containing the eigenvalues of $A$.

Hence, for a normal matrix $A$, the moments of $A,A^*$ are, for each $k\geq 1$ and $(\ee(1),\ee(2),\dots ,\ee(k))\in\{1,*\}^k$ with $\ee_1=|\{i\leq k:e(i)=1\}|$ and $\ee_*=|\{i\leq k:e(i)=*\}|$
\begin{eqnarray}
\frac{1}{n}\mathrm{Tr}(A^{\ee(1)}A^{\ee(2)}\cdots A^{\ee(k)})&=&\frac{1}{n}\mathrm{Tr}(U\Lambda^{\ee(1)}U^*U\Lambda^{\ee(2)}U^*\cdots U\Lambda^{\ee(1)}U^*) \\ 
&=&\frac{1}{n}\mathrm{Tr}(U\Lambda^{\ee(1)}\Lambda^{\ee(2)}\Lambda^{\ee(3)}\cdots \Lambda^{\ee(1)}U^*) \\ 
&=&\frac{1}{n}\mathrm{Tr}(\Lambda^{\ee_1}\Lambda^{\ee_*}) \\ 
&=&\frac{1}{n} \sum_{i\leq n} (\lambda_i)^{\ee_1} (\overline{\lambda_i})^{\ee_*},
\end{eqnarray}
which are exactly the moments (in the usual, probabilistic sense) of the complex random variable $X$ whose distribution $\mu_A$ assigns a weight of $\frac{1}{n}$ to each eigenvalue $\lambda_i$ of $A$.

In general, for a normal element $a$ in a non-commutative probability space $(\cA,\tau)$, we say that a probability measure $\mu:=\mu_a$ on the complex plane is the \emph{analytic distribution} of $a$ if $$\tau(a^m (a^*)^l)=\int_{\CC}z^m (\bar z)^l\mathrm{d}\mu(z),$$ for all $m,l\geq 0$. The existence of analytic distributions for normal elements is guaranteed if $\cA$ is a $C^*$-algebra. 

Canonical conditional expectations compatible with $\frac{1}{n}\mathrm{Tr}:M_n(\CC)\to \langle I_n \rangle$ are the identity $\mathrm{id}:M_n(\CC)\to M_n(\CC)$ and the projection to the diagonal $$\FF:(a_{ij})_{ij}\mapsto (a_{ij}\delta_{ij})_{ij}.$$

(3). Let $(\cA,\tau)$ be a NCPS with pairwise orthogonal projections $p_0,\dots, p_d\in \cA$, with $\tau(p_i)>0$ and $p_0+p_1+\dots+p_d=1_{\cA}$. Then there is a unique conditional expectation $\FF:\cA\to \langle p_0,\dots,p_d\rangle$ compatible with $\tau$, given by
$$\FF(a)=\sum_{i\leq k}\tau(p_i)^{-1}\tau(p_iap_i)p_i.$$ Such an OVPS is called a \emph{rectangular probability space} \cite{BG09,BG09a}.

For diagonal elements in a rectangular space  (i.e. elements $a\in \cA$ of the form $a=\sum_i p_iap_i$), the study of the distribution of $a$ is essentially the separated study of the pieces $a_i:=p_iap_i$. Indeed, the conditional expectation $\FF:\cA\to\cB$ splits as a sum $\FF=\sum_i \FF_i$, where $\FF_i:a\mapsto \tau(p_i)^{-1}\tau(p_iap_i) p_i$ is a \emph{compression} (i.e. the restriction $(p_i\cA p_i,\FF_i|_{p_i\cA p_i})$ is a scalar probability space with unit $p_i$).

Hence, it is natural to associate a normal diagonal element $a\in \cA$ in a rectangular space with the tuple of probability measures $\mu=(\mu_0,\mu_1,\dots, \mu_d)$, where $\mu_i$ is the analytic distribution of the normal element $p_iap_i$ in the compressed space $(p_i\cA p_i,\FF_i|_{p_i\cA p_i})$. 

Histograms can also be interpreted in terms of rectangular spaces (see Section 5). 

(4). We may endow the algebra $\cA=M_n(\CC)$ with the functional $\tau_{11}((a_{ij}))=a_{11}$ in order to obtain a more exotic (non-tracial) NCPS. Again, if $A\in M_n(\CC)$ is normal, then the moments with respect to the functional $\tau_{11}(*)=\langle e_{11},*e_{11}\rangle$ are the moments of a discrete measure $\mu$. 

Indeed, if the vector $e_1:=(1,0,0,\dots,0,0)$ is expressed as $e_1=\sum_{i\leq n}\alpha_i v_i$ in terms of the basis of eigenvectors $\{v_1,\dots, v_n\}$ of $A$, then clearly $$\tau_{11}(A^k(A^*)^l)=\langle e_{11},A^k(A^*)^le_{11}\rangle=\langle \sum_{i\leq n}\alpha_i v_i,\sum_{i\leq n}\lambda_i^k\bar \lambda_i^l\alpha_i v_i\rangle=\sum_{i\leq n}\lambda_i^k \bar \lambda_i^l\alpha_i^2,$$ which are the moments of the discrete probability measure with atoms at $\lambda_1,\lambda_2,\dots ,\lambda_n$ with corresponding weights $w_i=\alpha_i^2$. The fact that these are indeed the weights of a probability measure follows from the orthonormality of the eigenbasis.

It also follows trivially that, when fixing a vector of the eigenbasis and considering not only $\tau_{11}$ but running over all the possible $(\tau_{jj})_{j\leq n}$, the sum of the corresponding weights $w_i^{(j)}$ adds one, verifying that the average $\tau=\sum_i\frac{1}{n}\tau_{ii}=\frac{1}{n}\mathrm{Tr}$ (i.e., the normalized trace) assigns the discrete uniform measure on the eigenvalues (with multiplicities) as the distribution of a deterministic matrix.

This collection of analytic distributions is relevant in algebraic topology. For example, if the matrix $A$ happens to be an adjacency matrix of a graph, the weight $w_i^{(j)}$ is non-zero if and only if the vertex $v_j$ is contained in the same connected component as $v_i$. We will come back to more topological-algebraic considerations later, in Sections 3 and 4.

(5). If $(\cA_1,\cB_1, \EE_1), (\cA_2,\cB_2, \EE_2)$ are two OVPS, their tensor product $(\cA_1\otimes \cA_2,\cB_1\otimes \cB_2, \EE_1 \otimes \EE_2)$ is also an OVPS. If the spaces are actually scalar-valued, then the tensor product is also scalar-valued.

In particular the tensor product of two NCPS from examples (1) and (2) yields the NCPS $(M_n(\CC)\otimes\mathcal A_{\mathcal F},\frac{1}{n}\mathrm{Tr}\otimes \EE)$, which can be identified with the space of matrices with entries in $\cA_{\mathcal F}$ (\emph{random} matrices). Actually, for a normal random matrix $A\in M_n(\CC)\otimes \cA_{\mathcal F}$, the moments with respect to $\frac{1}{n}\mathrm{Tr}\otimes \EE$ coincide with the moments of the averaged spectral measure (i.e. the distribution of a random eigenvalue chosen from a random matrix).

Another operator-valued structure which has been useful in random matrix theory is $(M_n(\CC),\mathrm{id})\otimes (\mathcal A, \tau)=(M_n(\CC)\otimes\mathcal A,\mathrm{id} \otimes \tau)$, where expectation is taken entry-wise and no further matricial operation is performed. Free independence (see next definition and remark) is preserved by tensor products. 

\end{example}

Now that we have introduced non-commutative random variables, their moments and distributions, new notions of independence may be defined. 

\subsection{Non-commutative notions of stochastic independence and cumulants} A notion of independence must be understood as a rule for computing mixed moments.

In \cite{BGS02} an axiomatic approach is presented in terms of co-products of different categories of algebras. The new non-commutative notions of stochastic independence are the \emph{universal} non-commutative generalizations of the classical (commutative) notion of independence.

The axioms on the factorizations of mixed moments (associativity, symmetry, universality), have been formulated algebraically in terms of universal properties. Thus, each notion of independence leads to a robust, parallel non-commutative probability theory, where not only fundamental theorems (e.g. laws of large numbers, central limit theorems, Poisson limit theorems, etc.) can be formulated, but much more sophisticated concepts, such as L\'{e}vy processes or an analytic theory of characteristic functions, have been considered and developed. 
 
The axioms are also consistent with our usual notion of independence, in the sense that they collapse when restricted to categories of commutative algebras, leaving the tensor product (an hence, classical independence) as the unique \emph{commutative} notion of independence. It turns out that, in the general non-commutative setting, there are only three universal (and five natural) notions of independence:

\begin{definition}\label{def:ncindeps}
Let $(\cA,\cB, \FF)$ be a $\cB$-probability space. Let $A_1,\dots,A_k \subseteq \cA$ be algebras containing $\cB$.

(1). The algebras $A_1,\dots,A_k$ are $\cB$-classically independent if they commute and 
$$\FF(a_1^{n_1}a_2^{n_2}\cdots a_k^{n_k})=\FF(a_1^{n_1})\FF(a_2^{n_2})\cdots \FF(a_k^{n_k}),$$ for any $a_i\in A_i$, $n_i\geq 0$, $i\leq k$. 

(2). For any $a\in \cA$, let $\dot a:=a-\FF(a)$. The algebras $A_1,\dots,A_k$ are $\cB$-free iff $$\FF(\dot a_1 \dot a_2\dots  \dot a_m)=0,$$ for any $m\geq 1$ and any $a_i\in A_{j(i)}$, $i\leq m$, such that $j(1)\neq j(2)\neq\dots \neq j(m)$.

(3). The algebras $A_1,\dots,A_k$ are $\cB$-Boolean independent if
$$\FF(a_1a_2\cdots a_m)=\FF(a_1)\FF(a_2)\cdots \FF(a_m),$$
for any $m\geq 1$ and any $a_i\in A_{j(i)}$, $i\leq m$, such that $j(1)\neq j(2)\neq\dots \neq j(m)$.
\end{definition}

\begin{remark}\label{rem:indeps}
(1). The monotone independence and its mirror-image, the anti-monotone independence are non-symmetric notions of independence. That is, $A_1,A_2$ being independent does not imply that $A_2,A_1$ are independent. In order to avoid cumbersome definitions, we will introduce these notions of independence after some notation on partitions and multiplicative maps.

(2). The notions of independence are quite exclusive (at least in the scalar case $\cB=\CC$). For example, if two self-adjoint random variables are classically independent and free at the same time, then at least one of the variables must be a constant. In the free and classical case, constants are trivially independent from the whole algebra $\cA$. In the operator valued-level, the algebra of constants $\langle 1_{\cA} \rangle$ is essentially replaced by a larger algebra $\cB\subseteq\cA$. 

The notions of independence become less exclusive as the algebra $\cB\subseteq \cA$ grows larger. For example, random variables which are orthogonal to each other can be thought as being independent with respect to some rectangular space.

Indeed, let $(\cA,\tau)$ be a rectangular probability space with $\cB=\langle p_0,\dots, p_d \rangle$. Then the pieces $p_0ap_0,\dots, p_dap_d$ of a diagonal element are \emph{classically, free and Boolean independent} with respect to the conditional expectation $\FF:\cA\to\cB$.

This allows to express convex combinations of probability measures as sums of independent random variables. The extreme case $\cB=\cA$ implies $\FF=\mathrm{id}_{\cA}$, which makes all variables in $\cA$ independent (in all three senses).

(3). Probably one of the simplest and most useful properties of free independence is that it is preserved when operating with tensor products: If $A_1,A_2\subseteq \cA$ are $\cB$-free in $(\cA,\cB,\FF)$, then $A_1\otimes \cB',A_2\otimes \cB'\subseteq \cA \otimes \cB'$ are $\cB \otimes \cB'$-free in the OVPS $(\cA \otimes \cB',\cB \otimes \cB' ,\FF\otimes \mathrm{id})$.

In particular two matrices filled up with entries coming from free algebras are free over the algebra of complex matrices (case $\cB'=M_n(\CC), \cB=\CC$).
\end{remark}

A couple of decades ago, Speicher introduced  in \cite{Spe94} a combinatorial characterization of freeness which relies on Rota's works on incidence algebras of posets and multiplicative functions (see \cite{NS06}, for a comprehensive course, and \cite{Spe98} for the operator-valued case). A key concept for this combinatorial approach to independence are the cumulants. In order to define these, we must first introduce some notation on set partitions.

\begin{definition}\label{def:partitions}
We denote by $\mathcal P (n)$ the partitions of the set $[n]=\{1,\dots, n\}$. A partition $\pi=\{V_1,V_2,\dots,V_k\}\in \SP(n)$ is a decomposition of $[n]=\bigcup_{\i\leq k} V_i$ into disjoint, non-empty subsets (called blocks). $\mathcal P(n)$ is a poset with respect to the order of reverse refinement, with minimum $0_n:=\{\{1\},\{2\},\{3\},\dots, \{n\}\}$ and maximum $1_n:=\{1,2,3,\dots,n\}$.

Any partition $\pi$ defines an equivalence relation and hence we may write $a\sim_{\pi} b$ (or simply $a\sim b$) meaning that $a,b\in V$ for some block $V\in \pi$. Sometimes we abbreviate notation and write $\{1346,28,57\}$ for $\pi=\{\{1,3,4,6\},\{2,8\},\{5,7\}\}$. 

A partition $\pi=\{V_1,\dots ,V_k\}\in\SP(n)$ is a non-crossing partition if it fulfills the condition, that for any quadruple $(1\leq a<b<c<d\leq n)$, $a,c\in V_i\in \pi$ and $b,d\in V_j\in \pi$ implies $V_i=V_j$. 

A partition $\pi=\{V_1,\dots ,V_k\}\in\SP(n)$ is an interval partition if for any triple $(1\leq a<b<c\leq n)$, $a,c\in V_i\in \pi$ and $b\in V_j\in \pi$ implies $V_i=V_j$.

We denote respectively by $\IP(n)\subseteq \NCP(n) \subseteq \SP(n)$, the sub-posets of interval partitions and non-crossing partitions. 
\end{definition}

In the context of an operator-valued probability space $(\cA,\cB)$, we will often consider sequences of $\cB$-multi-linear maps $f=(f_n)_{n\geq 1}$, $f_n:\cA^n\to\cB$. The sequence $f$ may be extended multiplicatively to families of partitions $(f_{\pi})_{\pi\in L(n)\subseteq \SP(n)}$ in a natural way.

For the case of non-crossing partitions, $\pi \in L(n)\subseteq \NCP(n)$, the multiplicative extension of $f$ may be defined in the general context of a non-commutative algebra $\cA$.

For $\pi\in \NCP(n)$ we define the $\cB$-multi-linear map $f_{\pi}:\cA^n\to\cB$ which evaluates $f$ in a nested way, according to the partition $\pi$. For example, for $\pi=\{1349,2,57,6,8\}$ we have $f_{\pi}:\cA^9\to\cB$, given by
$$f_{\pi}(a_1,a_2,\dots,a_9)=f_4(a_1f_1(a_2),a_3,a_4f_2(a_5f_1(a_6),a_7)f_1(a_8),a_9)$$

Very often we ask our sequence of functionals to be $\cB$-balanced (i.e. for any $n\geq 1$ and $i\leq n-1$, $$f_n(a_1,a_2,\dots,a_i,ba_{i+1},\dots,a_n)=f_n(a_1,a_2,\dots,a_ib,a_{i+1},\dots,a_n),$$ so that the definition of $f_{\pi}$ does not depend on the decision of multiplying the nested map on the left or the right argument.  

In the context of an operator-valued probability space $(\cA,\cB,\FF)$, the fundamental multiplicative family of $\cB$-balanced functionals that we use are the moment multi-linear map $$\FF_n:(a_1,a_2,\dots,a_n)=\FF(a_1a_2a_3\cdots a_n).$$

Now we are able to define the monotone and anti-monotone notions of independence.

\begin{definition}
The sub-algebras $A_1,\dots,A_k\subseteq \cA$ are monotone independent (in that order) iff the mixed moments of $\langle A_1,\dots,A_k \rangle$ are computed according to the following rule:

For all $m\geq 1$ and any mixed moment $a_1a_2\dots a_m$, with $a_i\in A_{j(i)}$, $i\leq m$ we have $$\FF(a_1a_2\dots a_m)=\FF_{\pi}(a_1,a_2,\dots ,a_m),$$
where $\pi\in\NCP(m)$ is a non-crossing partition, constructed from the indices $(j(i))_{i\leq m}$ as follows:

We first let $W_1=\{i: j(i)=1\}$ be the first block of $\pi$. We then decompose the subset $W_{2}=\{i:j(i)=2\}$ into the blocks of $\pi$, by joining as many of those indices as possible, without crossing the previously selected block $W_1$. Now that the blocks containing the elements from the algebras $A_{1},A_2$ have been fixed, we decompose $W_3=\{i:j(i)=3\}$ into the blocks of $\pi$ by joining indices to form a maximum non-crossing partition, and so on, until all the blocks of the partition $\pi$ are determined.

For example, let $A_1,A_2,A_3$ be monotone independent and let $a_i\in A_i$. We associate the mixed moment $a_1a_2a_3a_2a_1a_3a_1a_2a_3$, with the partition $\pi=\{157,24,8,3,6,9\}$, and hence $$\FF(a_1a_2a_3a_2a_1a_3a_1a_2a_3)=\FF(a_1\FF(a_2\FF(a_3)a_4)a_5\FF(a_6)a_7)\FF(a_8)\FF(a_9)$$

The anti-monotone independence is just the monotone independence in the reverse order, in particular $A_1,\dots,A_k$ are monotone independent if and only if $A_k,\dots ,A_1$ are anti-monotone independent, and thus, the study of the later case is often omitted.
\end{definition}

There is a problem for defining $f_{\pi}$ for a crossing partition $\pi$ in the general non-commutative situation. However, if we assume $\cA$ to be a commutative algebra, we may simply ignore nestings and crossings and define, for example: $$f_{\{13,24\}}(a_1,a_2,a_3,a_4)=f_2(a_1,a_3)f_2(a_2,a_4).$$

In these contexts cumulants may be defined to characterize the afore mentioned notions of independence. The following recursions define the different cumulant maps. 

\begin{definition}\cite{Spe94,SW97,HS11}.
The free, Boolean, monotone and classical cumulant maps $(R_n)_{n\geq 1}$, $(B_n)_{n\geq 1}$, $(H_n)_{n\geq 1}$ and $(K_n)_{n\geq 1}$, $$R_n,B_n,H_n,K_n:\cA^n\to\cB,\quad n\geq 1,$$ are defined recursively through the moment-cumulant formulas:
\begin{eqnarray}
\FF_n(a_1,\dots,a_n)&=:&\sum_{\pi\in \NCP(n)}R_{\pi}(a_1,\dots,a_n).\\
&=:&\sum_{\pi\in \IP(n)}B_{\pi}(a_1,\dots,a_n).\\
&=:&\sum_{\pi\in \NCP(n)}\frac{1}{(\mathrm{nest}(\pi))!}H_{\pi}(a_1,\dots,a_n).
\end{eqnarray}
See \cite{AHLV15} for the definitions of the \emph{nesting tree} $\mathrm{nest}(\pi)$ of a non-crossing partition and its \emph{tree factorial}. For the case in which $\cA$ is commutative, the classical cumulants are defined accordingly, as the solutions to the recursion
$$\FF_n(a_1,\dots,a_n)=:\sum_{\pi\in \SP(n)}K_{\pi}(a_1,\dots,a_n).$$ 
\end{definition}

The cumulants may be solved in terms of the moments. For example, since $\IP(n)=\NCP(n)=\SP(n)$ for $n=1,2$ we get that the first cumulant is simply the mean
$$R_1(a)=B_1(a)=H_1(a)=K_1(a)=\FF(a),$$ whereas the second cumulant is the covariance $$R_2(a_1,a_2)=B_2(a_1,a_2)=H_2(a_1,a_2)=K_2(a_1,a_2)=\FF(a_1a_2)-\FF(a_1)\FF(a_2).$$

Cumulants maps may be defined in terms of moment maps by performing a M\"obius inversion with respect to the corresponding lattice of partitions. That is, we have the inverse, cumulant-moment formulas:
\begin{eqnarray}
K_n(a_1,\dots,a_n)&=&\sum_{\pi\in \SP(n)}\FF_{\pi}(a_1,\dots,a_n)\mu_{\SP}([\pi,1_n]),\\
R_n(a_1,\dots,a_n)&=&\sum_{\pi\in \NCP(n)}\FF_{\pi}(a_1,\dots,a_n)\mu_{\NCP}([\pi,1_n]),\\
B_n(a_1,\dots,a_n)&=&\sum_{\pi\in \IP(n)}\FF_{\pi}(a_1,\dots,a_n)\mu_{\IP}([\pi,1_n]),
\end{eqnarray}
where, $\mu_{\SP},\mu_{\NCP},\mu_{\IP}:\SP^2\to \CC$ are the M\"obius functions (defined as the inverses of the zeta functions in the framework of incidence algebras).

\begin{example}[Special non-commutative random variables]

Let $(\cA,\FF)$ be some $\cB$-probability space.

(1). Constant random variables $b\in \cB$ have cumulants, for $m\geq 1$ and $\varepsilon=(\varepsilon_1, \varepsilon_2,\dots,\varepsilon_m)\in\{1,*\}^m$:
$$C_m(b^{\varepsilon_1},b^{\varepsilon_2},\dots, b^{\varepsilon_m})=\left\{
\begin{array}{cc}
b^{\varepsilon_1},& m=1,\\
0,& \text{otherwise},\end{array}
\right.$$
where, $(C_n)_{n\geq 1}\in\{(K_n)_{n\geq 1}, (R_n)_{n\geq 1}, (B_n)_{n\geq 1}\}$ are any of the cumulant maps.

Now let $(\cA,\tau)$ be a scalar-non-commutative probability space.

(2). A (normal\footnote{in the sense $zz^*=z^*z$.}) random variable $z$ is standard complex gaussian if it has the same law as $\frac{1}{\sqrt 2}(x+iy)$, where $x=x^*,y=y^*\in \cA$ are independent standard gaussian random variables. 

Equivalently, in terms of cumulants (eq. moments), a random variable is complex gaussian if the mixed classical cumulants on $z,z^*$ are given, for $m\geq 1$ and $\varepsilon=(\varepsilon_1, \varepsilon_2,\dots,\varepsilon_m)\in\{1,*\}^m$ by 
$$K_m(z^{\varepsilon_1},z^{\varepsilon_2},\dots, z^{\varepsilon_m})=\left\{
\begin{array}{cc}
1,& m=2, \varepsilon_1\neq \varepsilon_2\\
0,& \text{otherwise}.\end{array}
\right.$$
Observe that $\frac{1}{\sqrt 2}(z+z^*)$ is (real) standard gaussianl.

(3). A (non-normal!) random variable $c\in \cA$ is a standard circular element if it has the non-commutative distribution of $\frac{1}{\sqrt 2}(s_1+is_2)$, where $s_1=s_1^*,s_2=s_2^*\in \cA$ are free standard semicircular elements. A circular element is characterized by its free cumulants
$$R_m(c^{\varepsilon_1},c^{\varepsilon_2},\dots, c^{\varepsilon_m})=\left\{
\begin{array}{cc}
1,& m=2, \varepsilon_1\neq \varepsilon_2\\
0,& \text{otherwise}.\end{array}
\right.$$
Semicircular elements are self-adjoint random variables $s=s^*$ such that
$$R_m(s,s,\dots, s)=\left\{
\begin{array}{cc}
1,& m=2, \\
0,& \text{otherwise}.\end{array}
\right.$$
Observe that $\frac{1}{\sqrt 2}(c+c^*)$ is a standard semicircular element.

(4). A (normal) random variable $u\in \cA$ is a Haar-distributed unitary random variable if it is unitary (i.e. $u^*=u^{-1}$) and its moments are given for $\varepsilon=(\varepsilon_1, \varepsilon_2,\dots,\varepsilon_m)\in\{1,-1\}^m$ by
$$\tau(u^{\varepsilon_1},u^{\varepsilon_2},\dots, u^{\varepsilon_m})=\left\{
\begin{array}{cc}
1,& if \sum_i\varepsilon_i=0\\
0,& \text{otherwise}.\end{array}
\right.$$
\end{example}

From probability theory, we know that tensor independent random variables have vanishing covariance. The converse is not true. This is not a surprise, as the vanishing of the covariance can be determined by simply knowing mixed moments of first and second orders, whereas, in view of Def. \ref{def:ncindeps}, independence is equivalent to an infinite number of equations on the moments. 

The fact that all these factorizations hold for the left-hand side of the moment-cumulant formulas turns out to be equivalent to the more uniform condition, on the right hand side, that all mixed cumulants vanish. Since moments and cumulants determine each other, cumulants are the higher dimensional generalizations of the covariance that we require to characterize independence.

Let $\cB\subseteq A_1,\dots,A_k \subseteq \cA$ be sub-algebras in some $\cB$-probability space $(\cA,\cB,\FF)$. For any choice of cumulant maps $(C_n)_{n\geq 1}\in\{(K_n)_{n\geq 1}, (R_n)_{n\geq 1}, (B_n)_{n\geq 1}\}$, we call
$C_m(a_1,a_2,\dots,a_m)$, $a_i\in A_{j(i)}$, $i\leq m$ a \emph{mixed cumulant} if there are $1\leq r<s\leq m$ such that $j(r)\neq j(s)$. 

For example, there are no mixed cumulants of order $1$ and the covariance $C_2(a_1,a_2)$ is a mixed cumulant if $j(1)\neq j(2)$.

\begin{theorem} \cite{Spe94,SW97}. Let $\cB\subseteq A_1,\dots,A_k \subseteq \cA$ be sub-algebras in some $\cB$-probability space $(\cA,\cB,\FF)$.

(1). The algebras $A_1,\dots,A_k$ are $\cB$-classically independent iff $\langle A_1,\dots,A_k \rangle$ is commutative and all the mixed classical cumulants vanish.   

(2). The algebras $A_1,\dots,A_k$ are $\cB$-free iff all the mixed free cumulants vanish.   

(3). The algebras $A_1,\dots,A_k$ are $\cB$-Boolean independent iff all the mixed Boolean cumulants vanish.   

\end{theorem}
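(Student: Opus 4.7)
The strategy combines the stated moment-cumulant formulas with M\"obius inversion on $\SP(n)$, $\NCP(n)$ and $\IP(n)$, together with one key combinatorial observation: every non-crossing partition, and a fortiori every interval partition, admits at least one \emph{interval block}, i.e.\ a block whose elements form a contiguous stretch of integers. The plan is to prove the three parts by a single uniform template, starting with the easier ``if'' direction.

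For the ``if'' direction, fix alternating indices $j(1)\neq j(2)\neq\cdots\neq j(m)$ and $a_i\in A_{j(i)}$. In the Boolean case (3), expand $\FF(a_1\cdots a_m)=\sum_{\pi\in \IP(m)}B_\pi(a_1,\dots,a_m)$; every block of $\pi\in\IP(m)$ is an interval, so any block of size $\geq 2$ contains two adjacent indices from different subalgebras and is hence a mixed Boolean cumulant, vanishing by hypothesis. Only the finest partition $0_m$ survives, yielding $\FF(a_1)\cdots\FF(a_m)$. For the free case (2), apply the analogous expansion to $\FF(\dot a_1\cdots\dot a_m)$: each $\pi\in\NCP(m)$ with a singleton block $\{i\}$ contributes zero through the factor $R_1(\dot a_i)=\FF(\dot a_i)=0$, while each $\pi$ without singletons contains, by the combinatorial observation above, an interval block of size $\geq 2$, which must be a mixed cumulant and hence also zero. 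For the classical case (1), commutativity permits sorting the product by subalgebra, and the classical moment-cumulant expansion on $\SP(m)$ with vanishing mixed classical cumulants then factorizes across the subalgebras, recovering the required product rule.

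For the ``only if'' direction I would induct on $m$. The case $m=1$ is vacuous and $m=2$ reduces directly to $C_2(a_1,a_2)=\FF(a_1a_2)-\FF(a_1)\FF(a_2)$, which vanishes on elements from different subalgebras by the second-order consequence of each factorization rule. For $m\geq 3$, invert the moment-cumulant formula to write $C_m(a_1,\dots,a_m)=\FF(a_1\cdots a_m)-\sum_{\pi\neq 1_m}C_\pi(a_1,\dots,a_m)$, apply the independence factorization rule to the leading moment (reducing in the free case to alternating centered products), and invoke the inductive hypothesis to discard lower-order mixed cumulants in the sum. The remaining non-mixed terms are identified through the moment-cumulant formula applied within each subalgebra, and the M\"obius function on the relevant poset ensures that the surviving combination cancels exactly.

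The main obstacle is the inductive step for the free case. Unlike Boolean or classical independence, which are direct product rules, free independence is a recursive vanishing condition, so one cannot simply substitute an explicit factorization formula for $\FF(a_1\cdots a_m)$. The cleanest route is to prove simultaneously the centering invariance $R_\pi(a_1,\dots,a_n)=R_\pi(\dot a_1,\dots,\dot a_n)$ for every $\pi\in\NCP(n)$ without singleton blocks, so that centering is available without altering the cumulant expressions and the inductive step can be reduced to alternating centered products on which the defining freeness condition applies directly. The operator-valued setting adds only a minor bookkeeping complication: all cumulant and moment maps must be treated as $\cB$-balanced $\cB$-multilinear maps, which is handled uniformly once the multiplicative extension $f_{\pi}$ has been set up as in the text.
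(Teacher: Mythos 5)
The paper cites this result from \cite{Spe94,SW97} and does not itself supply a proof, so there is no in-paper argument to compare against; I evaluate the proposal on its own terms. Your ``if'' direction is correct and essentially the standard one: expand the centered, alternating moment over the relevant poset, kill singleton blocks via $R_1(\dot a_i)=\FF(\dot a_i)=0$, and use the fact that every non-crossing (hence every interval) partition possesses an interval block --- so any non-singleton block on an alternating word is forced to be mixed --- to see that only the term the factorization rule demands survives. The classical case is likewise fine once commutativity is invoked.

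The genuine gap is in the ``only if'' direction, and it is not quite where you place it. You flag the ``inductive step for the free case'' and propose centering invariance $R_\pi(a_1,\dots,a_n)=R_\pi(\dot a_1,\dots,\dot a_n)$ for $\pi$ without singletons as the remedy, so as to ``reduce to alternating centered products.'' But centering and alternation are independent issues: the invariance lets you replace each $a_i$ by $\dot a_i$ without altering $R_\pi$, yet it does nothing to turn a non-alternating word into an alternating one. A mixed cumulant $C_m(a_1,\dots,a_m)$ can perfectly well have $j(i)=j(i+1)$ for some $i$; then $\dot a_1\cdots\dot a_m$ is still not alternating, and all three factorization rules of Definition~\ref{def:ncindeps} --- including the freeness condition $\FF(\dot a_1\cdots\dot a_m)=0$ --- are stated only for alternating words, so the ``leading moment'' step of your inversion argument simply has nothing to fire on. To close the free case you need a tool you do not currently invoke: either the Krawczyk--Speicher formula for free cumulants with products as arguments (see \cite{NS06}), which lets you compress a consecutive same-algebra pair $a_ia_{i+1}$ into one argument at the cost of a sum over non-crossing partitions joining with the compression, giving the induction a shorter word to stand on; or a uniqueness argument, namely that the ``if'' direction already exhibits one joint distribution with vanishing mixed cumulants making the $A_i$ free, and freeness recursively determines all mixed moments from the restrictions $\FF|_{A_i}$, so the two conditions define the same distribution and hence the same cumulants. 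The Boolean and classical cases face the same non-alternating issue in principle, but there the compression of a consecutive same-algebra pair into a single element is immediate from the explicit product rule, which is why the gap bites hardest in the free case --- just for a different reason than the one you identified.
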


Combined with multi-linearity (which follows directly from the multi-linearity of $\FF_n$), the previous characterization implies that cumulants are additive for independent random variables. For example, if $(X_N)_{n\geq 1}$ are self-adjoint i.i.d random variables and $S_N:=X_1+X_2+\dots+X_N$, then \begin{equation}\label{eq:cumadd}
K_m(S_N,S_N\dots,S_N)=\sum_{i\leq N} K_m(X_i,\dots,X_i)=N K_m(X_1,\dots,X_1),
\end{equation}
since all the mixed cumulants vanish.

Hence, the Law of large lumbers follows: $$\lim_{N\to \infty}K_m(N^{-1}S_N, N^{-1}S_N,\dots ,N^{-1}S_N)= \left\{
\begin{array}{cc}
\EE(X_1),& m=1\\
0,& m\geq 2,\end{array}
\right.$$
because these are exactly the cumulants of a constant random variable $c=\EE(X_1)$.  For \emph{centered}, self-adjoint i.i.d random variables $(X_N)_{n\geq 1}$, we obtain the central limit theorem in the same way: $$\lim_{N\to \infty}K_m(N^{-1/2}S_N, N^{-1/2}S_N,\dots ,N^{-1/2}S_N)= \left\{
\begin{array}{cc}
\EE(X_1)=0,& m=1,\\
\EE(X_1^2)=\sigma^2,& m=2,\\
0,& m\geq 3,\end{array}
\right.$$
since the vanishing of all cumulants or order greater than $3$ characterizes normal random variables. Thus, cumulants enable direct proofs for the weak versions of the LLN and the CLT and the exact same arguments can be used to derive the free, Boolean and monotone\footnote{Although no general characterization of monotone independence is yet available in terms of orthogonality of monotone cumulants, they do satisfy the additive property of Eq. (\ref{eq:cumadd}), when restricted to \emph{identically distributed} random variables, as above.} versions. The corresponding Gaussian distributions are, respectively, Wigner's semicircle law, the symmetric Bernoulli distribution and the arcsine distribution.

\section{Elementary realizations of independent non-commutative random variables}

The spectral graph theory is at the heart of non-commutative probability theory; the simplest realizations of independent non-commutative random variables and their convolutions are exactly through graphs and their spectra.

The other prominent realization is through Voiculescu's asymptotic freeness of random matrices. Although, technically speaking, free independence is not yet directly related to algebraic topology there are some similarities between the main concepts featuring in TDA and random matrix theory. Free probability is, at the moment, the most developed non-commutative probability theory and the one with the most impressive algorithmic implementations, which we will sketch later in Section 6. 

Boolean probability theory has proved to be useful, at least indirectly, for understanding problems related to classical and (specially) free probability theories. As examples, see the homomorphism in \cite{BN08a}, which explains the presence of Boolean transforms in the algorithms for computing free multiplicative convolutions \cite{BB07,BSTV14}, or the proof of fourth moment theorem for infinitely divisible random variables \cite{Ari13}, which relies on the continuity of the so-called Bercovici-Pata bijections between infinitely divisible random variables \cite{BP99,BN08b}.

The realization of the additive Boolean convolution in terms of graphs is one of the main motivations for our new notions of distributions for simplicial complexes in Section 4, with strong connections to algebraic topology. A first instance of such kind of connection was observed in \cite{DPT15a}.

\subsection{Tensor products and classical independence}

In algebraic probability we sometimes refer to the classical independence as tensor independence, as it can be realized through tensor products in the framework of Example 
\ref{ex:ovps} (2). 

Indeed if $A\in M_n(\CC)$ and $B\in M_m(\CC)$ are normal matrices, then so are $\tilde A:=A\otimes I_m$, $\tilde B:=I_n\otimes B,$ $\tilde A +\tilde B$ and $\tilde A \tilde B=A\otimes B$. Furthermore, if $A=UD_1U^*$  and $B=VD_2V^*$ are their Jordan normal forms, then $U\otimes V$ simultaneously diagonalizes $\tilde A$, $\tilde B$, $\tilde A +\tilde B$ and $\tilde A \tilde B$  which are similar, respectively, to the diagonal matrices $D_1\otimes I_m$, $I_n\otimes D_2$, $D_1\otimes I_m+I_n\otimes D_2$ and  $D_1\otimes D_2$. Observe that $\mu_A=\mu_{\tilde A}$ and $\mu_B=\mu_{\tilde B}$.

In addition, $D_1\otimes I_m+I_n\otimes D_2$ (resp. $D_1\otimes D_2$) are the diagonal matrices with exactly all the possible sums (resp. products) of pairs of eigenvalues. Hence, at the level of spectral measures $\tilde A$, $\tilde B$ are \emph{realizations} of independent random variables with given distributions and thus, independence may be understood algebraically as a special positioning of operators.  

From an algebraic-probabilistic point of view (in terms of moments), it would have been enough for us to simply observe that the unital algebra generated by $\langle \tilde A,\tilde A^*,\tilde B,\tilde B^*\rangle $ commutes, and that $\frac{1}{nm}\mathrm{Tr}=\frac{1}{n}\mathrm{Tr}\otimes \frac{1}{m}\mathrm{Tr}$, and hence the mixed moments of $\tilde A,\tilde A^*$ and $\tilde B,\tilde B^*$ are computed according to the factorization given by classical independence.

\subsection{Graph products and non-commutative independence}\label{subsection:graphprod}

The tensor product yields independent random variables, even if we we consider the alternative, non-tracial NCPS $(M_n(\CC),\tau_{11})$ (for this, we need only to observe that, again, the projection to the upper-left corner $\tau_{11}:M_{nm}(\CC)\to \CC$ is the tensor product of the corresponding projections to the upper corners $\tau^{(1)}_{11}:M_n(\CC)\to\CC$ and $\tau^{(2)}_{11}:M_m(\CC)\to\CC$). 

The situation gets even nicer if the matrices in question are adjacency matrices of graphs.  Let $G=(V,E)$ be a graph. If $|V|=n$, we define the adjacency matrix of $G$  as the matrix $A_G=(a_{ij})\in M_n(\CC)$ with entries 
$$a_{ij}=a_{ji}=\left\{
\begin{array}{cc}
1,& (v_i,v_j)\in E,\\
0,& \text{otherwise}.\end{array}
\right.$$

First of all, observe that if $A,B$ are (necessarily self-adjoint) adjacency matrices of graphs, so are the matrices $A\otimes I_m$, $I_n\otimes B$, and $A\otimes I_m+I_n\otimes B$.

The moments of $A_{G}$ w.r.t. $\frac{1}{n}\mathrm{Tr}$ are of course still the moments of the spectral measure, but now have a combinatorial interpretation:
$$\frac{1}{n}\mathrm{Tr}(A_G^k)=\frac{1}{n}\sum_{i:[k]\to[n]} a_{i(1)i(2)}a_{i(2)i(3)}\dots a_{i(k)i(1)},$$
and thus the $k$-th moment of $A_G$ counts cycles (i.e. sequences of adjacent edges $(v_{i(1)},v_{i(2)})(v_{i(2)},v_{i(3)}),\dots,(v_{i(k)},v_{i(1)})$) of size $k$ within the graph. 

Switching from the functional $\frac{1}{n}\mathrm{Tr}$ to $\tau_{11}$ can be interpreted nicely in terms of moments. By considering moments with respect to the functional $\tau_{11}$, the vertex $v_1\in V$ (corresponding to the first row/column of the adjacency matrix) plays now a special role in the graph. We should think that the graph is \emph{rooted} at the vertex $v_1$. The moments now count the number of $k$-cycles which start and end at the root
$$\tau_{11}(A_G^k)=\sum_{i:[k-1]\to[n]} a_{1i(1)}a_{i(1)i(2)}\dots a_{i(k-1)1}.$$
It is not hard to show by induction that the Boolean cumulants count cycles at the root which are irreducible in the sense that the cycle does not visit the root at any intermediate step. In other words, for a cycle $(v_1,v_{i(1)})(v_{i(1)},v_{i(2)}),\dots,(v_{i(k-1)},v_{1})$ in $G$ we have
$$B_k(a_{1i(1)},a_{i(2)i(3)},\dots ,a_{i(k-1)1})=\left\{
\begin{array}{cc}
1,& \text{if } i(1),i(2),\dots i(k-1)\neq 1,\\
0,& \text{otherwise},\end{array}
\right.$$

We have seen that it is possible to realize tensor independent random variables w.r.t. $\tau_{11}$. An even more interesting fact is that, with respect to $\tau_{11}$, it is possible to realize  Boolean, monotone and free convolutions in terms of appropriate graph products \cite{ABO04,ALS07,Oba08}. We only illustrate the Boolean case below, as it is simpler and more relevant for our purposes.

For two rooted graphs $G_1$ and $G_2$, consider their \emph{star product} $G_1\star G_2$ obtained by identifying the two graphs by their roots, and considering the identified vertex as the new root. From our discussion above, it is clear that the Boolean cumulants of $G_1\star G_2$ are the sum of the Boolean cumulants of $G_1$ and $G_2$: any irreducible cycle of $G_1\star G_2$ is either an irreducible cycle in $G_1$ or $G_2$, as any cycle including edges from $G_1$ and $G_2$ needs to cross the root at some intermediate point. Thus, $G_1\star G_2$ realizes the additive Boolean convolution of the probability measures. 

Hence Boolean cumulants filter cycles within graphs according to some local topological property. Coincidentally, other examples of functionals which are additive with respect to the star product of graphs $G_1\star G_2$, or more general ``star'' products of topological spaces, are the Betti numbers (see Section 4).  

Unfortunately, the scalar-valued classical and free cumulants do not provide additional information about cycles. Indeed, the space $(M_n(\CC),\tau_{11})$ is so pathological, that the Boolean cumulants coincide with the other cumulant functionals when evaluated in elementary cycles. 

Hence, if we would like to filter cycles by more interesting topological features (e.g. a cycle being simple, a path being Hamiltonian, two cycles being homologous, etc.) by the means of cumulants, we may still need some more combinatorial ingredients. Our guess is that such cumulants will be found in the more general frameworks of non-commutative probability theory in the sense of traffics or homotopy probability theory.

Graphs are one dimensional skeletons of more general simplicial complexes. In the next section we investigate the problem of expressing topological invariants of simplicial complexes in terms of their non-commutative distributions. 

\section{Non-commutative distributions for simplicial complexes} \label{section:NCDSC}

We have seen that graphs are naturally considered as non-commutative random variables and graph products produce realizations of independent non-commutative random variables and their convolutions. It is natural then to ask if we may regard more general topological spaces, such as simplicial complexes, as non-commutative random variables. 

The usual notions of non-commutative distributions for graphs are weighted versions of the (discrete) spectral probability distributions of its adjacency matrix $A^0_1(G):=A_G$, and the moments may be described combinatorially by counting certain paths along adjacent edges on the graphs. These combinatorial interpretations of moments can be dualized and generalized to higher dimensions, by considering simplicial complexes instead of graphs, and moments described by sequences of $k$-dimensional faces, regarded as ``adjacent'' if they share a face of some fixed dimension $r\geq 0$. Thus, a notion of distribution for a simplicial complex could in principle be related to some multivariate combination of these generalized adjacency matrices $(A^r_k(X))_{0\leq r \neq k \leq d}$. We observe that, actually, much of this information can be organized very neatly in terms of operator-valued non-commutative probability. 

Let $X=\bigcup_{r\leq d}\bigcup_{j\leq n_r} (\sigma^{(r)}_j)$ be a finite simplicial complex of dimension $d$, where each $\sigma^{(r)}_j\in X$ is an $r$-dimensional face. We usually label the vertices with the numbers $1,2,\dots, n_0$ and denote a $j$-dimensional face $f=(i(0)i(1)\dots i(j))$ by listing the vertices it contains in increasing order. For example, we denote the abstract hollow tetrahedron by 
$$X=\{1,2,3,4,12,13,14,23,24,34,123,124,134,234\}.$$

Let $N=n_0+n_1+\dots+n_d$. Consider the vector space $\CC^N$ with a basis indexed by the faces of $X$ in increasing order, and consider its natural decomposition into subspaces generate by faces of a fixed dimension $\CC^N=\bigoplus_{0\leq i\leq d}\CC^{n_i}=\bigoplus_{0\leq i\leq d}C_i$.  The collection of orthogonal projections $(P_0,P_1,\dots,P_d)$ onto the chain groups $(C_0,C_1,\dots,C_d),$ defines a rectangular space in $M_N(\CC)$, with $\cB=\langle P_0,\dots, P_d\rangle$.  

We consider the incidence matrix $I(X)\in M_N(\CC)$ as an element of the operator-valued space $(M_N(\CC),\langle P_0, \dots, P_d \rangle,\FF)$. For $d=1$, $X=G$ is a graph and $I(X)^2=0$, so the $\langle P_0,P_1 \rangle$-distribution of $I(X)\in M_{n_0+n_1}(\CC)$ is very accessible. In fact, it is determined by the orthogonal operators $$I(X)^*I(X)=\Lambda^0_1(G)+A^0_1(G)\in M_{n_0}(\CC),\quad I(X)^*I(X)=\Lambda^1_0(G)+A^1_0(G)\in M_{n_1}(\CC),$$ where $A^0_1(G),A^1_0(G)=A^0_1(G')$ are the adjacency matrices of $G$ and its dual graph $G'$ and $\Lambda^1_0(G), \Lambda^0_1(G)$ are their (diagonal) degree matrices. 

If $G$ is a simple graph, $G'$ is $2$-regular, $\Lambda^0_1(G)=2I_{n_1}$, so the spectrum of $A^0_1{G'}$ can be obtained directly from $I(X)^*I(X)$ and thus from the $\langle P_0,P_1 \rangle$-distribution of $I(X)$. Therefore, the study of the non-commutative distribution of $G$ through the $\langle P_0,P_1 \rangle$-distribution of $I(G)$ (which can be also considered for arbitrary dimensions) is similar to its usual study in terms of $A^0_1(G)$ (in fact, the former contains the later for $G$ regular).

Moreover, for general $d\geq 1$ we obtain again the relations, for all $0\leq i\leq d$, $$P_iI(X)^*I(X)P_i=\Lambda^i_{i+1}(X)+A^i_{i+1}(X),\quad  P_iI(X)I(X)^*P_i=\Lambda^i_{i-1}(X)+A^{i}_{i-1}(X),$$ which involve higher dimensional adjacency/degree matrices. Simplicial complexes are the higher dimensional generalizations of simple graphs and we get that $\Lambda^i_{i-1}(X)=(i+2)I_{n_i}$.

For $d\geq 2$ it is no longer true that $I(X)^2=0$ and thus its distribution is harder to compute, but there is a way around this problem. Any order of the $n_0$ vertices of $X$ induces signed version $J(X)$ of the incidence matrix, called the boundary matrix, which plays an important role in algebraic topology. 

The boundary operator of a simplicial complex is defined as 
$$J(X)=P_0J(X)P_1+P_1J(X)P_2+\dots+P_{d-1}J(X)P_d\in M_N(\CC),$$
where each $P_{r-1}J(X)P_{r}$, restricted to $C_{r}$, is the $r$-dimensional boundary operator $\partial_r:C_{r}\to C_{r-1}$, which can be defined on a $r$-dimensional face $f=(i(0)i(1)\dots i(r))$ as $$\partial_r(f)=
\sum_{k\leq r}(-1)^{k}(i(0)i(1)\cdots i(k-1)\widehat{i(k)}i(k+1)\cdots i(r))$$
where $\widehat{i(k)}$ denotes that the vertex $i(k)$ has been removed from $f$ to obtain an $(r-1)$-dimensional face.

For example, if $X$ is the void tetrahedron, the boundary matrix is a $(4,6,4)$-block matrix
$$J(X)=\left(
\begin{array}{ccc}
0&\partial_1 & 0\\
0&0 & \partial_2\\
0&0 & 0
\end{array}\right)$$
with
$$\partial_1=
\left(
\begin{array}{cccccc}
1&1&1&0&0&0\\
-1&0&0&1&1&0\\
0&-1&0&-1&0&1\\
0&0&-1&0&-1&-1
\end{array}\right), \quad \partial_2=
\left(
\begin{array}{cccc}
1&1&0&0\\
-1&0&1&0\\
0&-1&-1&0\\
1&0&0&1\\
0&1&0&-1\\
0&0&1&1
\end{array}\right).
$$

By definition of the boundary matrix, one has that $P_{i-1}JP_{j}=\delta_{ij}\partial_i$ is either zero or the $i$-dimensional boundary operator $\partial_i:C^i\to C^{i-1}$. Thus, the fundamental equation $\partial_{i-1}\partial_{i}=0$ translates into the desired equation $J(X)^2=0$. 

Thus, we propose this $\cB$-valued distribution as \emph{the} non-commutative distribution for the simplicial complex $X$. First let us show that, for the case of a graph $X=G$, the distribution of $\langle J,J^*\rangle$ is still quite related to the usual distribution of the adjacency matrix $A_G=A_1^0(G)$.

\subsection{Graphs} If we let $d=1$, our simplicial complex is a graph $G$ and hence $\langle J,J^*\rangle$ depends simply on the joint distribution of the orthogonal operators $\partial_1\partial_1^*=P_0 JJ^* P_0=JJ^*$ and $\partial_1^*\partial_1= P_1 J^*J P_1= J^*J$. We observe that
$$\partial_1\partial_1^*=\Lambda^0_1(G)-A^0_1(G),$$
where $A^0_1(G)$ is the adjacency matrix and $\Lambda^0_1(G)$ is the degree (diagonal) matrix of $G$.

Therefore, if the graph $G$ is $k$-regular (i.e. if all its vertices have the same degree $k$) the adjacency matrix and its spectrum can be recovered from the $\langle P_0,P_1\rangle$-distribution of $\langle J,J^*\rangle$, since this last one includes the distribution of $\partial_1\partial_1^*$.

If the graph is not $k$-regular, one must either add the algebra generated by the degree matrix $\Lambda^0_1(G)$, or consider a more general notion of non-commutative distribution (e.g. in the sense of traffics \cite{CDM16}, where degree matrices appear quite naturally). 

In this sense, the usual study of the spectra of adjacency matrices of graphs in non-commutative probability does not fall far away from the study of the $\cB$-valued distribution of $\langle J,J^*\rangle$, which makes sense for higher dimensions and encodes important topological invariants. For example, the spectra of $\partial_1\partial_1^*:C_0\to C_0$ and $\partial_1^*\partial_1:C_1\to C_1$ have important interpretations.

Let $H=\{v_{i(1)},v_{i(2)},\dots,v_{i(k)}\}\subseteq G$ be a connected component of $G$. Since $\partial_1\partial_1^*=\Lambda^0_1(G)-A^0_1(G)$, it is easy to see that $\partial_1\partial_1^*(v_H)=0$, where the null-eigenvector $v_H=(\alpha_1,\alpha_2,\dots,\alpha_{n_0})^\dagger\in C_0$ is the indicator vector of $H$, with $\alpha_j=1,$ if $v_j\in H$ and zero otherwise. Hence, the number of connected components $\beta_0(G)$ of the graph $G$, can be read from the weight of the atom at zero of the spectral measure of $\partial_1\partial_1^*$.

The matrix $$\partial_1^*\partial_1=:\Lambda^1_0(G)-\mathrm{sgn}(A^1_0(G))=2I_{n_1}+\mathrm{sgn}(A^1_0(G)):C_1\to C_1$$ is the sum of the (constant, diagonal) degree matrix of the ($2$-regular) dual graph $G'$ of $G$ and an oriented (or signed) version of the adjacency matrix of $G'$, $A^0_1(G')=A^1_0(G)$ . 

The spectrum of $\partial_1^*\partial_1$ has also an important interpretation. The kernel of the boundary matrix $\partial_1$ is isomorphic to a subspace of (properly oriented) cycles  $Z_1\subseteq C_1$. Thus, the weight $\beta_1$ of the atom at zero of the spectral measure of $\partial_1^*\partial_1$ counts the number of holes $\beta_1(G)$ of the graph $G$. 

\subsection{General case ($d\geq 2$)} For a $d$-dimensional simplicial complex, consider its boundary matrix $J$. Observe that $JJ^*$ and $J^*J$ are the block-diagonal matrices
$$JJ^*=\left(
\begin{array}{cccc}
\partial_1\partial_1^* & 0 &\dots&0\\
0 & \ddots&0&0\\
\vdots & 0  &\partial_d\partial_d^*&0\\
0 & \dots &0&0
\end{array}\right),\quad J^*J=\left(
\begin{array}{cccc}
0&0 & \dots &0\\
0&\partial_1^*\partial_1 & 0&0\\
\vdots &0 & \ddots &0\\
0&0 & \dots &\partial_{d}^*\partial_{d}
\end{array}\right).$$ 

Once again, the weight $\beta_0(X)$ of the zero eigenvalue of the spectral measure of the positive operator $\partial_1\partial_1^*$ coincides with the number of connected components of $X$, which coincides with the number of connected components of its $1$-skeleton (the simplicial complex obtained from $X$ by erasing all its faces of dimension greater that $1$). 

In the same way, the dimension of the kernel of the positive operator $\partial_d^*\partial_d$ coincides with the number of $(d+1)$-dimensional holes in $X$. For example, for the void tetrahedron above, the spectral measure of $\partial_2^*\partial_2$ is $\frac{1}{4}(\delta_0+3\delta_4)$.

It turns out that there is a more general result and the operators $\partial_1\partial_1^*$, and $\partial_d^*\partial_d$ are just special corners of a block-diagonal operator $$\mathcal L:=JJ^*+J^*J=\left(
\begin{array}{ccccc}
\partial_1\partial_1^*&0 & \dots &0&0\\
0&\partial_1^*\partial_1+\partial_2\partial_2^* & 0&0&0\\
0&0 & \ddots &0&0\\
0&0 & \dots &\partial_{d-1}^*\partial_{d-1}+\partial_d\partial_d^*&0\\
0&0 & \dots &0&\partial_{d}^*\partial_{d}
\end{array}\right),$$
which encodes all the Betti numbers.

\begin{theorem}
Let $X$ be a finite simplicial complex and $J$ be its boundary matrix. If $\mu=(\mu_0,\dots,\mu_d)$ is the multivariate analytic distribution of the positive diagonal element $\mathcal L:=JJ^*+J^*J$, then the weight of the atom at zero of $\mu_i$ is $n_i^{-1}\beta_i(X)$, where $n_i$ is the number of $i$-dimensional faces of $X$ and $\beta_i(X)$ its $i$-th Betti number.
\end{theorem}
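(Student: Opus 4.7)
The plan is to reduce the claim to the classical Hodge-theoretic identification of harmonic chains with homology, together with the rectangular-space description of $\mu_{i}$ given in Example 2.2(3).

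First I would read off from the block-diagonal form of $\mathcal{L}=JJ^{*}+J^{*}J$ displayed immediately before the theorem that its $i$-th diagonal block is the $i$-th combinatorial Laplacian
\[
\mathcal{L}_{i}:=P_{i}\mathcal{L}P_{i}=\partial_{i}^{*}\partial_{i}+\partial_{i+1}\partial_{i+1}^{*},
\]
with the conventions $\partial_{0}:=0$ and $\partial_{d+1}:=0$ (so that the extremal blocks reduce to $\partial_{1}\partial_{1}^{*}$ and $\partial_{d}^{*}\partial_{d}$). Because $\mathcal{L}$ is a diagonal element of the rectangular space $(M_{N}(\CC),\langle P_{0},\dots,P_{d}\rangle)$, its multivariate analytic distribution splits as in Example 2.2(3), so $\mu_{i}$ is the scalar spectral distribution of $\mathcal{L}_{i}$ in the compressed space $(P_{i}M_{N}(\CC)P_{i},\tau(P_{i})^{-1}\tau(P_{i}\cdot P_{i}))\cong (M_{n_{i}}(\CC),\tfrac{1}{n_{i}}\mathrm{Tr})$. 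Since $\mathcal{L}_{i}$ is self-adjoint, $\mu_{i}$ is the discrete measure that assigns weight $n_{i}^{-1}$ to each eigenvalue (with multiplicity), so
\[
\mu_{i}(\{0\})=\frac{1}{n_{i}}\dim\ker\mathcal{L}_{i}.
\]

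It now suffices to show $\dim\ker\mathcal{L}_{i}=\beta_{i}(X)$. Since $\mathcal{L}_{i}$ is a sum of two positive semidefinite operators on $C_{i}$, a vector $v\in C_{i}$ satisfies $\mathcal{L}_{i}v=0$ iff $\langle\mathcal{L}_{i}v,v\rangle=\|\partial_{i}v\|^{2}+\|\partial_{i+1}^{*}v\|^{2}=0$, iff $v\in\ker\partial_{i}\cap\ker\partial_{i+1}^{*}$. Using the standard identity $\ker\partial_{i+1}^{*}=(\mathrm{Im}\,\partial_{i+1})^{\perp}$ and the fundamental relation $\partial_{i}\partial_{i+1}=0$ (which gives $\mathrm{Im}\,\partial_{i+1}\subseteq\ker\partial_{i}$), the finite-dimensional Hodge decomposition yields
\[
\ker\partial_{i}=\mathrm{Im}\,\partial_{i+1}\oplus(\ker\partial_{i}\cap\ker\partial_{i+1}^{*})=\mathrm{Im}\,\partial_{i+1}\oplus\ker\mathcal{L}_{i},
\]
so that $\ker\mathcal{L}_{i}$ is a canonical complement of $B_{i}=\mathrm{Im}\,\partial_{i+1}$ in $Z_{i}=\ker\partial_{i}$. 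Consequently $\ker\mathcal{L}_{i}\cong Z_{i}/B_{i}=H_{i}(X;\CC)$, whose dimension is $\beta_{i}(X)$ by definition.

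Combining the two paragraphs gives $\mu_{i}(\{0\})=n_{i}^{-1}\beta_{i}(X)$. The argument is almost entirely conceptual: the only non-trivial ingredient is the Hodge identification, and even this is elementary in the finite-dimensional setting, reducing to the two observations that a sum of positive semidefinite operators has kernel equal to the intersection of their kernels and that $(\mathrm{Im}\,\partial_{i+1})^{\perp}=\ker\partial_{i+1}^{*}$. The main thing to watch out for is simply the bookkeeping at the extremes $i=0$ and $i=d$, where one of the two summands defining $\mathcal{L}_{i}$ vanishes, so that $\ker\mathcal{L}_{0}=\ker\partial_{1}^{*}$ encodes $\beta_{0}(X)$ and $\ker\mathcal{L}_{d}=\ker\partial_{d}$ encodes $\beta_{d}(X)$, consistently with the remarks already made in the excerpt about the two corner blocks $\partial_{1}\partial_{1}^{*}$ and $\partial_{d}^{*}\partial_{d}$.
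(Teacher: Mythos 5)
Your proof is correct and carries out precisely what the paper delegates to a citation: the paper's ``proof'' consists of the single remark that the statement is a reformulation of Eckmann's computation of the spectra of the combinatorial Laplacians $\mathcal L_i = P_i\mathcal L P_i$, while you actually supply the finite-dimensional Hodge argument ($\ker\mathcal L_i=\ker\partial_i\cap\ker\partial_{i+1}^*\cong H_i(X;\CC)$) together with the rectangular-space reading of $\mu_i$ from Example 2.2(3). One small point worth noting: the paper's proof text writes $\mathcal L_i=\partial_i\partial_i^*+\partial_{i-1}^*\partial_{i-1}$, which is an index slip; your $\mathcal L_i=\partial_i^*\partial_i+\partial_{i+1}\partial_{i+1}^*$ is the version consistent with the block display of $\mathcal L$ immediately preceding the theorem.
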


\begin{proof}
The theorem is a mere reformulation of results of Eckmann \cite{Eck44}, who computed the spectra of the higher-order combinatorial Laplacians (defined also in his work), which are exactly the compressed operators $\mathcal L_i:=P_i\mathcal L P_i=\partial_{i}\partial_{i}^*+\partial_{i-1}^*\partial_{i-1}:C_i\to C_i$. 
\end{proof}

In \cite{Fri98}, Friedman designed efficient algorithms for computing the spectra of combinatorial Laplacians, by the means of Hodge Theory. In \cite{HJ13}, the authors study the effects of weight functions and operations between simplicial complexes (e.g. wedges, joins and duplications of motifs) on the spectra of $\mathcal L_i$ or $J^*$.

Now we introduce the basic terminology from TDA.

\section{Basic models in TDA}\label{section:TDA}

We have shown elementary connections between algebraic topology and non-commutative probability theory. More sophisticated links can be found in \cite{Par15}. In particular, Boolean cumulants and the lattice of interval partitions $\IP$ appear in \cite{DPT15a}.

In the last decades, with the surge of topological data analysis, there has been an increasing interest on the statistical use of Betti numbers. In particular, this has boosted research on several concrete directions, such as: 
\begin{itemize}
\item Finding efficient algorithms for computing Betti numbers of large simplicial complexes.  
\item Adapting hypothesis tests and inference methods from statistics to treat persistence-diagram-valued random variables.
\end{itemize}
In order to be able to discuss, in the last section, possible directions in which non-commutative probability theory may help in TDA (or vice-versa), we provide toy examples for illustrating the very basic concepts of TDA and a first connection with non-commutative probability. 

Before doing so (and without the need of further notation), we show that histograms are random variables of Boolean nature, as they are images of conditional expectations w.r.t. to interval partitions on the real line.

\subsection{Histograms and interval partitions}

A histogram may be thought in the context of a rectangular probability space. Indeed, consider the algebra of Borel-measurable real-valued random variables $\cA_{\mathcal F}$ from Example \ref{ex:ovps} (1). Let $\cB:=\langle 1_{I(0)},1_{I(1)},\dots, 1_{I(k)} \rangle$ be the algebra generated by indicator functions to some pairwise disjoint bins (i.e. measurable sets) $\pi=\{I_0,I_1,\dots, I_k\}$ yielding a partition of the real line. 

The indicator functions are pairwise orthogonal projections and hence the conditional expectation $\EE_{\pi}:\cA_{\mathcal F}\to\cA_{\cB}$ maps a real random variable $X$ onto a random variable $\EE_{\pi}(X)=w_0\delta_{I(0)}+w_1\delta_{I(1)}+\dots +w_k\delta_{I(k)}$, with $k+1$ bins as possible outcomes. The probabilities of the bins are of course the expectations $(w_j)_{j\geq 1}=(\EE(1_{I(j)}))_{j\geq 1}$. 

In principle, bins may be quite arbitrary, even disconnected. However, it will be hard for us to construct a histogram from those bins in practice. For example, consider the partition with $n$ bins: $$I_i^{(n)}:=[\frac{i-1}{n^2},\frac{i}{n^2}]\cup[\frac{n+i-1}{n^2},\frac{n+i}{n^2}]\cup\dots\cup [\frac{n^2-n+i-1}{n^2},\frac{n^2-n+i}{n^2}],\quad i\leq n,$$ covering the unit interval and another bin $I_0=[0,1]^C$. The knowledge of the outcome of a random variable on the unit interval falling in any of these bins does not give much useful geometrical information about the outcome (in particular, the center of mass of  $I_i^{(n)}$ may be quite distant from it).

If we choose \emph{good} bins, the projection-valued random variable $\EE_{\pi}(X)$ can essentially be replaced by the real-valued discretization of $X$ $w_0\delta_{\lambda_0}+w_1\delta_{\lambda_1}+\dots +w_k\delta_{\lambda_k}$, where the $(\lambda_j)_{j\geq 1}$ are the centers of mass of the intervals $(I(j))_{j\geq 1}$.

In particular, a histogram $\EE_{\pi}(X)$ will be approximated by the empirical histograms $\EE_{\pi}(\bar X)=w_0'\delta_{\lambda_0}+w_1'\delta_{\lambda_1}+\dots +w_k'\delta_{\lambda_k}$, with weights proportional to the number of occurrences of each bin (asymptotically proportional, by the LLN, to the actual weights $(w_j)_{j\geq 0}$). 

If the number of independent experiments is very large, we may even make $k\to \infty$, a bit more slowly though, in such a way that we gradually refine the bins so that their sizes become arbitrarily small but the number of expected occurrences for each bin still grows arbitrarily large. Under such an asymptotic regime, the empirical process of histograms converges very, very slowly to the actual density of $X$ (if it exists). This can be thought as a general way to display a probability distribution, with obvious generalizations to higher dimensional euclidean spaces. A comparison with the much faster convergence of empirical eigenvalues of Wigner matrices is displayed in Fig. 1.

\begin{figure}\label{fig:convsuperconv}
\includegraphics[height=3cm]{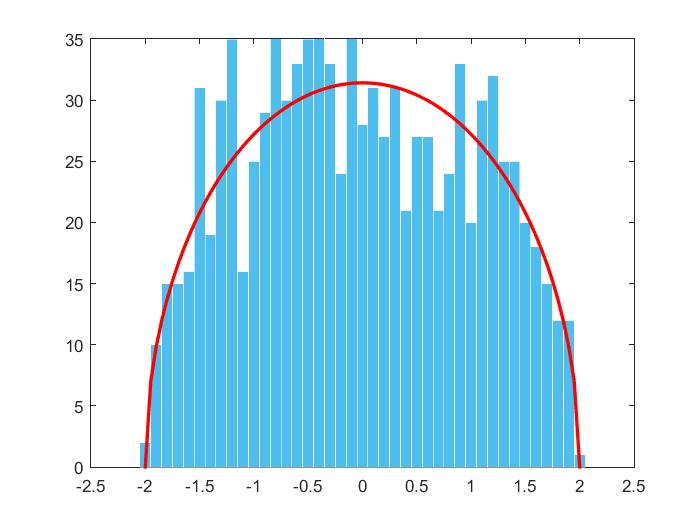}\includegraphics[height=3cm]{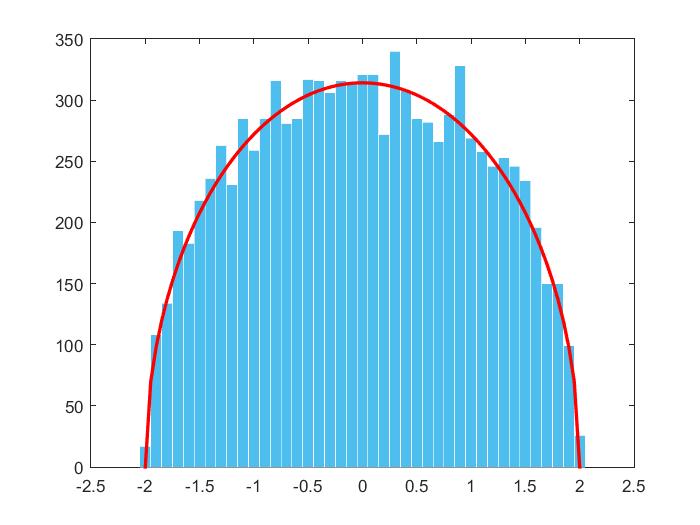}\includegraphics[height=3cm]{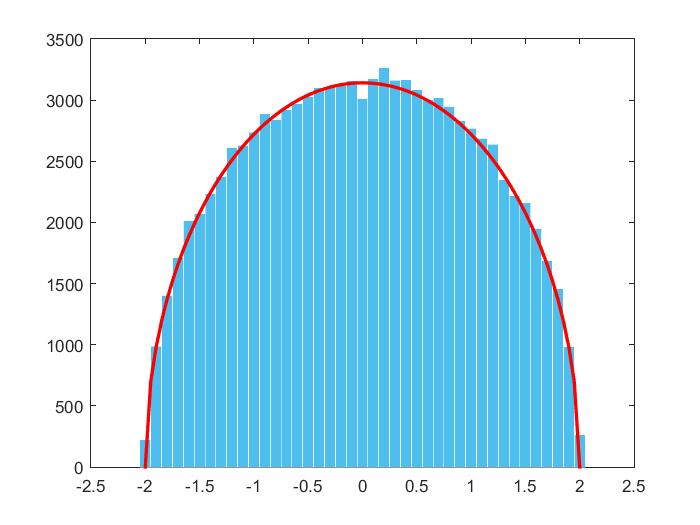}

\includegraphics[height=3cm]{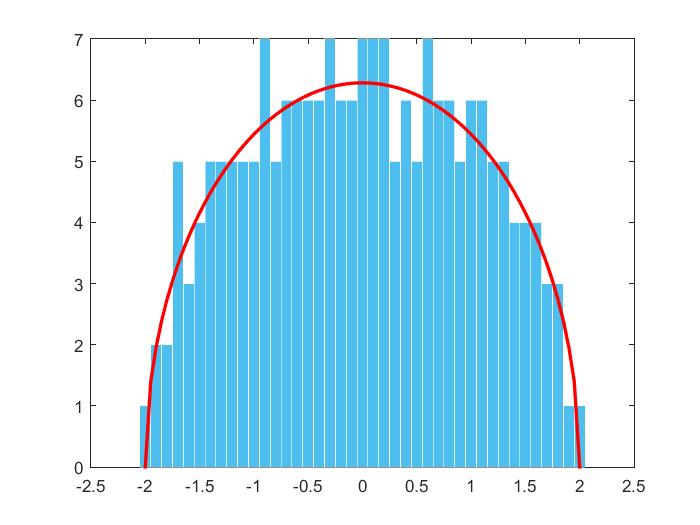}\includegraphics[height=3cm]{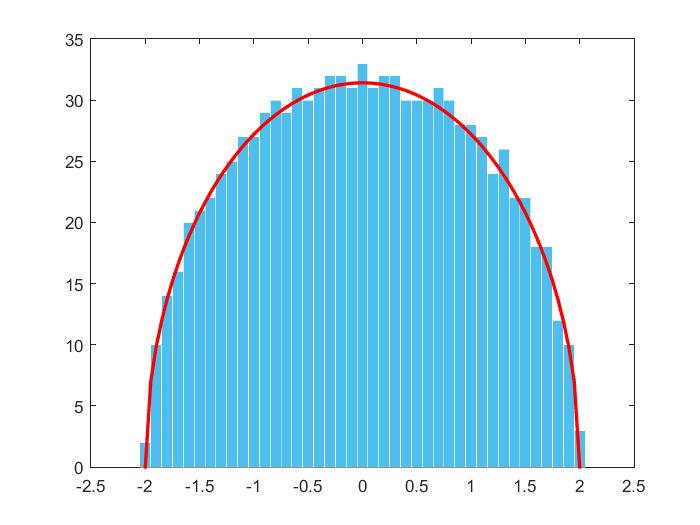}

\caption{Above: Uniform independent samples of semicircular random variables, $N=10^3,10^4,10^5$. Below: Repulsive eigenvalues of a single self-adjoint Gaussian matrix $W_{200}$ (left) $W_{1000}$ (right)}.
\end{figure}

When the outcomes of $X$ are now valued in a topological space, it may be not be clear in general how to retrieve \emph{effectively} the distribution which produced those outcomes. Hence, some simpler questions could be asked about the distribution. For example, a good estimate on the number of connected components of the support of $X$.

Similar to the asymptotic histogram process just described above, the notion of persistent homology allows us to recover, asymptotically, \emph{topological} properties of the support of $X$. We illustrate this with toy models in order to introduce the basic terminology on TDA, which will be needed in te last section. 

\subsection{A toy example in TDA}\label{subs:tensortoyTDA}

Suppose that $Y=(\bar X_i)_{i \leq N}$ are now independent outcomes of a random variable $X$ with (say, uniform) distribution on a given topological space (e.g. a torus in $\RR^3$).

The set $Y$ is called a point cloud. For each $t\geq 0$, we let  $B(\bar X_i,t)$, $i\leq N$ be closed balls of radius $t$, with centers at the outcomes $(\bar X_i)$. Then, for each $t\geq 0$ we consider the corresponding \v{C}ech complexes $(\Delta_{t})_{t\geq 0}$, which are simplicial complexes obtained in the following way:

The vertices of $\Delta_{t}$ are simply the points in $Y$. A $k$-dimensional simplex $(i_0,i_1,\dots, i_k)$ is included in $\Delta_{t}$ iff $\emptyset \neq \bigcap_{0\leq j\leq k}  B(\bar X_{i_j},t)$. 

Hence, as $t$ grows we will be adding more faces to the simplicial complex $\Delta_t$. For example: $\Delta_{0}$ consists only on the vertices and the first edge will be added when $2t\geq \min_{i,j} ||\bar X_i-\bar X_j||$.

For tame topological spaces and $N$ large enough, there will be some time interval $t_0 \leq t\leq t_1$, such that the support of $X$ and $Y_{t}$ (and hence $\Delta_{t}$) are homotopically equivalent. The endpoint $t_1$ is a constant related to the different diameters of the topological space in question (e.g. the inner and outer radii of a torus), whereas $t_0$ depends on the local distances between neighboring outcomes, and will converge to zero as $N\to \infty$.

Since $\Delta_{t}$ is a simplicial complex, we may calculate topological invariants with the computer, and visualize the evolution of the Betti numbers (or other invariants) as $t$ grows (see Figure 4)\footnote{The images of points on tori, their Betti curves and persistense diagrams are thanks to Yair Hernandez and Gilberto Flores.}.

The main idea of TDA is that one may use these topological summaries to construct statistics that allow us to classify point clouds. The way in which one actually does this is still an art-science, and the methods need to be adjusted quite carefully according to the nature of the data. 

Now we list some quite general facts on the implementation of operator-valued free probability to random matrices. These will also give us useful notation from random matrix theory to describe our toy models with repulsion in Section 7 and hopefully illustrate the algorithmic potential of non-commutative probability, which we would like to mimic for computational/stochastic algebraic topology.

\section{Large random matrices and free independence}\label{section:RMFP}

In the same spirit as the tensor product of matrices, Voiculescu noted in his seminal paper \cite{Voi91}, that free independence describes the asymptotic mixed normalized expected traces (moments) of matrices which have been randomly rotated by unitary matrices. Randomly rotated matrices can be thought as being placed in \emph{general position}.

Now let us summarize some classic results on the asymptotic global spectra of random matrices by reformulating these in terms of Voiculescu's free probability theory. For this, we need first the notion of convergence in non-commutative distribution.

\begin{definition}[Convergence in non-commutative distribution]
Let $(\cA_N,\tau_N)_{N\geq 1}$, $(\cA,\tau)$, $(a_1^{(N)},\dots,a_k^{(N)})\in \cA_N^{k}$, and $(a_1,\dots,a_k)\in \cA^k$  be collections of non-commutative random variables in different non-commutative probability spaces. We say that $(a_1^{(N)},\dots,a_k^{(N)})$ converges in distribution to $(a_1,\dots,a_k)$ iff all the mixed moments converge, that is, for all $m\geq1$ and $i:[m]\to[k]$, 
$$\lim_{N\to \infty}\tau_N(a_{i(1)}^{(N)},\dots,a_{i(m)}^{(N)})\to \tau(a_{i(1)},\dots,a_{i(m)}).$$
\end{definition}

The non-self-adjoint Gaussian matrix, also known as Ginibre matrix, $C_N:=\frac{1}{\sqrt N}(z_{ij})$ is the random matrix obtained from placing independent standard complex Normal random variables on the entries of an $N\times N$ matrix. In the literature, replacing the adjective ``Gaussian" by ``Wigner", means that we allow the $z_{ij}$ to be (not necessarily Normal) centered random variables with variance $\EE(z_{ij}z_{ij}^*)=1$. 

The non-self-adjoint Gaussian matrix is bi-unitarily invariant, that is, $U_NC_NV_N$ has the same distribution, for any choice of unitary matrices $U_N,V_N$ in the compact unitary group $\mathcal U_N$.

The random self-adjoint matrix $S_N=\frac{1}{\sqrt {2}}(C_N+C_N^*)$ is, essentially, the first random matrix for which an asymptotic spectral analysis was performed. Wigner showed in \cite{Wig58} that the averaged spectral distribution of $S_N$ converges to the standard (Wigner's) semicircle law.

Some decades later, Marchenko and Pastur studied in \cite{MP67} the spectra of modifications of the Wishart ensemble, by considering the models $C_ND_NC_N^*$, where $(D_N)\to d$ are self-adjoint deterministic matrices with some asymptotic analytic distribution $\mu_{D_N}\to \mu_d$. They observed that the asymptotic distributions depends on $D_N$ only through $\mu_d$ and computed the limiting law. 

The group $\mathcal U_N$ of unitary matrices is a compact topological group, and hence, a unique uniform (Haar) probability measure can be defined on it. It turns out that the (random) polar part of a non-self-adjoint Gaussian random matrix $U_N=C_N(C_N^*C_N)^{-1/2}$ has exactly the Haar distribution.

Now we are able to summarize Voiculescu's results in \cite{Voi91} in the following theorems and corollaries.

\begin{theorem}[Asymptotic freeness of random matrices and deterministic matrices \cite{Voi91}]\label{Th:Afree}
Let $C_1^{(N)},C_2^{(N)},\dots, C_{p+q}^{(N)}$ be independent non-self-adjoint Gaussian ensembles, considered as elements in the non-commutative probability spaces $(\cA_N,\tau_N):=(M_N(\CC)\otimes\mathcal A_{\mathcal F}, \frac{1}{N}\mathrm{Tr}\otimes \EE)$ of random matrices. Let us drop the $N$ super-index immediately, and let, for each $i\geq p$,
$U_i:=C_i(C_iC_i^*)^{-1/2}$ be the (individually Haar-distributed) polar parts of the first $p$ matrices. Then as $N\to\infty$ the convergence
$$(U_1,U_1^*\dots,U_p^*,C_{p+1},C_{p+1}^*,\dots,C_{p+q}^*)\to(u_1,u_1^*,\dots,u_p^*,c_{p+1},c_{p+1}^*,\dots,c_{p+q}^*)$$
holds in non-commutative distribution, where $u_1,\dots,u_p,c_{p+1},\dots,c_{p+q}\in\cA$ are \emph{free} Haar-unitaries and circular elements. 

Furthermore, if $D_1,\dots, D_{r}\in M_N(\CC)\subset\cA_N$ are deterministic matrices with some algebraic limiting distribution $(D_1,D_1^*,\dots, D_{r},D_r^*)\to (d_1,d_1^*\dots d_r, d_r^*)$ for some operators $d_1,\dots, d_r \in\cA$, then 
$$(U_1,\dots,U_p^*,C_{p+1},\dots,C_{p+q}^*,D_1,\dots,D_r^*)\to(u_1,\dots,u_p^*,c_{p+1},\dots,c_{p+q}^*,d_1,\dots,d_r^*),$$
where, again, $u_1,\dots,u_p,c_{p+1},\dots,c_{p+q}\in\cA$ are \emph{free} Haar-unitaries and circular elements, \emph{free} from the algebra $\langle d_1,d_1^*\dots d_r, d_r^* \rangle$.
\end{theorem}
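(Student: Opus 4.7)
The plan is to prove the theorem by the method of moments, using the complex Wick formula as the main combinatorial device and then a genus-expansion argument. First I would establish the cleaner statement in which the polar parts are replaced by the Gaussian matrices themselves, namely
$$(C_1,C_1^*,\dots,C_{p+q},C_{p+q}^*,D_1,D_1^*,\dots,D_r,D_r^*)\to (c_1,c_1^*,\dots,c_{p+q},c_{p+q}^*,d_1,d_1^*,\dots,d_r,d_r^*),$$
where the $c_i$ are free standard circulars, jointly free from $\langle d_1,d_1^*,\dots,d_r,d_r^*\rangle$. The Haar-unitary statement is then extracted by a polar-decomposition argument.

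For the first step, I would expand a mixed $*$-moment
$$\tfrac{1}{N}\,\EE\,\mathrm{Tr}\bigl(C_{i_1}^{\ee_1}D_{j_1}C_{i_2}^{\ee_2}D_{j_2}\cdots C_{i_m}^{\ee_m}D_{j_m}\bigr)$$
in matrix coordinates and apply the complex Wick formula: the expectation becomes a sum over pair partitions $\pi$ of $[m]$ in which each pair matches a $C_{i_k}$ with a $C_{i_l}^*$ of the same color $i_k=i_l$. Each pair imposes Kronecker deltas on matrix indices, and carrying out the remaining summations produces, for each $\pi$, a factor $N^{-1+\chi(\pi)}$, with $\chi(\pi)$ the Euler characteristic of the ribbon graph obtained by gluing the $m$-gon along $\pi$, times a product of traces of alternating words in the $D_j$'s dictated by the orbit structure of $\pi$ against the cyclic trace. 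As $N\to\infty$ only the planar (non-crossing) pairings survive, producing exactly the moment-cumulant expansion in which the only non-vanishing free cumulants are $R_2(c_i,c_i^*)=R_2(c_i^*,c_i)=1$, and all mixed cumulants between distinct colors, or between any $c$ and any $d$, vanish. By Speicher's vanishing-of-mixed-cumulants characterization of freeness stated in the previous section, this is precisely the joint moment formula asserting that $c_1,\dots,c_{p+q}$ are free circulars and are jointly free from $\langle d_1,d_1^*,\dots,d_r,d_r^*\rangle$.

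For the second step, I would pass to polar parts. In the limit $W^*$-probability space each circular element $c_i$ admits a polar decomposition $c_i=u_i|c_i|$ with $u_i$ a Haar unitary free from its radial part $|c_i|$, whose distribution is the quarter-circle law; this is the standard $R$-diagonal characterization of a circular element. To realize this at the matrix level, I would approximate $x\mapsto x^{-1/2}$ on the spectrum of $|C_i|$ by polynomials after truncating away from zero: for every $\varepsilon>0$, the function $x\mapsto x^{-1/2}\mathbf{1}_{x\geq \varepsilon}$ is uniformly approximable by polynomials on any compact interval, so the moments of $U_i=C_i(C_i^*C_i)^{-1/2}$ are limits of moments of polynomial expressions in $C_i,C_i^*$, to which the first step applies verbatim. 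The discarded low-singular-value region is controlled by the classical hard-edge estimates for the square Ginibre ensemble, and the resulting error vanishes as $\varepsilon\to 0$ because the quarter-circle density is integrable against $x^{-1/2}$ near the origin. Joint convergence of the full tuple, and the freeness between the $u_i$'s, the $c_j$'s, and $\langle d_1,d_1^*,\dots,d_r,d_r^*\rangle$, follows because every mixed moment is a limit of moments of polynomial expressions already handled in the first step.

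The main obstacle is precisely this passage to the polar part: the factor $(C_iC_i^*)^{-1/2}$ is not a polynomial in $C_i,C_i^*$, and the smallest singular value of $C_N$ is not uniformly bounded below in $N$, so the functional-calculus approximation has to be combined with quantitative control of the hard edge of the Ginibre spectrum. Everything else — the Wick expansion, the genus bound that isolates non-crossing pairings in the limit, and the identification of the surviving sum with the moment-cumulant formula of a free family free from a given subalgebra — is essentially book-keeping once the combinatorial framework is in place.
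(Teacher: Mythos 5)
The paper offers no proof of this theorem: it is stated as a known result and cited to Voiculescu's 1991 paper (and, implicitly, to the later strengthenings needed for arbitrary deterministic matrices $D_i$). So there is no ``paper proof'' to compare against; what follows is an assessment of your argument on its own terms.

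Your overall strategy --- first prove asymptotic freeness for the Ginibre ensembles $C_i$ with the deterministic $D_j$ via the complex Wick formula and a genus/planarity argument, then extract the Haar unitaries by a polar decomposition and a functional-calculus approximation --- is a legitimate route, and the first half is the textbook argument: after Wick pairing of the $c_{ij}$ entries, the index sums produce $N^{\chi(\pi)-1}$ with $\chi$ the Euler characteristic of the associated map, only non-crossing pairings survive as $N\to\infty$, and the surviving terms are exactly the non-crossing moment-cumulant sum for free circulars free from $\langle d_1,\dots,d_r^*\rangle$. That part is sound, modulo the usual care about convergence in $*$-distribution versus almost-sure convergence, which does not affect the statement here. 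Note that the more standard route to the Haar-unitary statement is not through polar decomposition at all but through the Weingarten calculus (Collins, Collins--Sniady, cited as \cite{Col03,CS06} in this paper) or through invariance/Schwinger--Dyson equations; your Ginibre-plus-polar approach is a genuine alternative.

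There are two real gaps in the second half. First, the function $x\mapsto x^{-1/2}\mathbf{1}_{x\geq\varepsilon}$ is discontinuous at $\varepsilon$, hence \emph{not} uniformly approximable by polynomials on a compact interval; you need a continuous truncation such as $x\mapsto\min(x^{-1/2},\varepsilon^{-1/2})$, so that the truncated polar part $C_i\,q_\varepsilon(C_i^*C_i)$ has operator norm bounded by a constant uniformly in $N$ and $\varepsilon$. Second, ``hard-edge estimates'' for the smallest singular value of the Ginibre matrix are not what is needed and would not by themselves control the error: since $\|U_i\|=1$ deterministically and $\|C_i\,q_\varepsilon(C_i^*C_i)\|$ is uniformly bounded, the error in any fixed mixed $*$-moment is controlled by the \emph{expected fraction} of singular values of $C_i$ below $\sqrt{\varepsilon}$. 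That fraction is $O(\varepsilon^{1/2})$ uniformly in large $N$ because the expected ESD of $C_i^*C_i$ converges to the Marchenko--Pastur law with parameter $1$, whose density is $\sim x^{-1/2}$ at the origin; no small-ball/hard-edge bound on $\sigma_{\min}$ is required, and invoking it obscures the actual mechanism. With these two corrections the $\varepsilon\to 0$ interchange with $N\to\infty$ closes, and the identification of the limit with $c_i=u_i|c_i|$ ($u_i$ Haar unitary free from the quarter-circular $|c_i|$, i.e.\ the $R$-diagonal form of a circular) completes the argument.
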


\begin{corollary}
(1). The Wigner ensemble $S_N=\frac{1}{\sqrt{2}}(C_N+C_N^*)$ converges in distribution to $s=\frac{1}{\sqrt{2}}(c+c^*)$, with semicircular distribution, which plays the role of the Gaussian distribution in free probability.

(2). The algebras $\cA_1:=\langle C_N, C_N^*\rangle$ and $\cA_2:=\langle D_N \rangle$ are asymptotically free (that, is, they converge to free sub-algebras). This leads to nice formulas for the asymptotic distribution of $C_ND_NC_N^*$. In particular, Speicher's combinatorial machinery leads to elementary computations of the asymptotic moments, which explains the possibility to formulate the functional equations for the Cauchy-Stieltjes transform derived by Marchenko and Pastur. In the free probability framework, the asymptotic laws of $C_ND_NC_N^*$ correspond to the free analogues of the compound Poisson distributions.

(3). Elementary computations show that if $\langle u,u^* \rangle$ and $\langle a,b \rangle$, $a=a^*,b=b^*$ are free, then $\langle a \rangle$ and $\langle ubu^* \rangle$ are free. Hence, in view of Theorem \ref{Th:Afree}, we may produce realizations of free non-commutative random variables, by randomly conjugating deterministic matrices $D_1,D_2$ with (say, diagonal) entries, converging to some prescribed distributions $\mu_1,\mu_2$. 

Therefore, the free additive and multiplicative convolutions $\mu_1\boxplus\mu_2$ and $\mu_1\boxtimes\mu_2$, can be \emph{realized} as the asymptotic eigenvalue distributions of $D_1+U_ND_2U_N^*$ and $D_1^{1/2}U_ND_2U_N^*D_1^{1/2}$. 

\end{corollary}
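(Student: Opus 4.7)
The plan is to derive all three assertions as direct consequences of Theorem~\ref{Th:Afree} combined with standard properties of freeness and the moment-cumulant formalism already introduced in the excerpt. Aside from a short algebraic lemma, everything amounts to routine unpacking of the definitions.

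For Part (1) I would use continuity of moments under linear combinations: the normalized trace moments of $S_N = \frac{1}{\sqrt{2}}(C_N + C_N^*)$ are fixed polynomials in the mixed $*$-moments of $C_N$, and Theorem~\ref{Th:Afree} gives $(C_N, C_N^*) \to (c, c^*)$ in non-commutative distribution. Hence $S_N$ converges to $s := \frac{1}{\sqrt{2}}(c + c^*)$, whose identification as standard semicircular was already recorded in the example on special non-commutative random variables (the free cumulants of $s$ all vanish except $R_2(s,s) = 1$, inherited from those of $c$).

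For Part (2), the asymptotic freeness of $\langle C_N, C_N^* \rangle$ and $\langle D_N \rangle$ is precisely the second assertion of Theorem~\ref{Th:Afree}. To obtain the asymptotic distribution of $C_N D_N C_N^*$, I would expand $\tau_N((C_N D_N C_N^*)^m)$, pass to the limit, and then apply the moment-cumulant formula over $\NCP(3m)$ to $\tau((c d c^*)^m)$. Since $c$ is circular, its only nonvanishing free cumulants are $R_2(c, c^*) = R_2(c^*, c) = 1$; combined with the vanishing of mixed free cumulants between $\langle c, c^* \rangle$ and $\langle d \rangle$, the sum collapses to non-crossing pair partitions of the $c,c^*$-letters, with interior blocks decorated by nested $d$-moments. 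This is exactly the combinatorial description of the free compound Poisson (i.e.\ the free multiplicative convolution of the Marchenko--Pastur law with $\mu_d$), and rearranging the resulting generating series yields the fixed-point equation for the Cauchy--Stieltjes transform originally derived analytically by Marchenko and Pastur.

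Part (3) splits into an algebraic lemma and its random-matrix application. For the lemma, assume $\langle u, u^* \rangle$ is free from $\langle a, b \rangle$ over $\CC$. Any element of $\langle u b u^* \rangle$ equals $u q(b) u^*$ for some polynomial $q$, and by the trace property $\tau(u q(b) u^*) = \tau(q(b))$, so centeredness reduces to $\tau(q(b)) = 0$. Given alternating centered elements $p_1(a), u q_1(b) u^*, p_2(a), u q_2(b) u^*, \ldots$, I would view their product as the strictly alternating word
\[
[p_1(a)]\,[u]\,[q_1(b)]\,[u^*]\,[p_2(a)]\,[u]\,[q_2(b)]\,[u^*]\cdots
\]
in letters that lie alternately in $\langle a, b \rangle$ and $\langle u, u^* \rangle$, each letter being centered (note $\tau(u) = \tau(u^*) = 0$ since $u$ is Haar). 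Freeness of $\langle u, u^* \rangle$ and $\langle a, b \rangle$ then forces the trace to vanish. The realization is then immediate: Theorem~\ref{Th:Afree} supplies a Haar unitary $U_N$ asymptotically free from $\langle D_1^{(N)}, D_2^{(N)} \rangle$, whence the lemma gives asymptotic freeness of $\langle D_1^{(N)} \rangle$ and $\langle U_N D_2^{(N)} U_N^* \rangle$, so $D_1^{(N)} + U_N D_2^{(N)} U_N^*$ realizes $\mu_1 \boxplus \mu_2$ by the definition of additive free convolution. For the multiplicative case, cyclicity of the trace gives $\tau_N((D_1^{1/2} U_N D_2 U_N^* D_1^{1/2})^m) = \tau_N((D_1 \cdot U_N D_2 U_N^*)^m)$, reducing the computation to moments of a product of two asymptotically free positive variables, which by definition yield $\mu_1 \boxtimes \mu_2$. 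The only step that is not purely formal is the bookkeeping in the algebraic lemma; even there, the difficulty is notational rather than mathematical.
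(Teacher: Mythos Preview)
Your proposal is correct. The paper itself offers no proof of this corollary at all---it is stated immediately after Theorem~\ref{Th:Afree} and the text simply moves on, treating all three parts as evident consequences of asymptotic freeness together with the earlier examples and definitions. What you have written is precisely the standard unpacking one would supply if pressed for details, and it matches the spirit of the paper's remark that the computations are ``elementary.''

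One small point worth flagging: in part~(3) the paper's phrasing of the algebraic lemma (``if $\langle u,u^*\rangle$ and $\langle a,b\rangle$ are free, then $\langle a\rangle$ and $\langle ubu^*\rangle$ are free'') is literally false without the extra hypothesis that $u$ is a Haar unitary---take $u=1$ for a counterexample. You implicitly repaired this by invoking $\tau(u)=\tau(u^*)=0$ ``since $u$ is Haar,'' which is exactly what is needed and is consistent with the intended application via Theorem~\ref{Th:Afree}. It would be worth stating that assumption explicitly rather than slipping it in mid-argument.
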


The celebrated ``circular law'' (see \cite{Gin65}, or \cite{TV10}, for minimal assumptions) indicates that the distribution of a random eigenvalue from the random matrices $C_N$ converges as $N\to \infty$ to the uniform distribution on the unit disk. Non-normal matrices are spectrally unstable and hence, the derivation of the circular law follows quite different techniques in random matrix theory. 

Furthermore, the non-self-adjoint Gaussian matrix $C_N$ provides not only a way to produce the uniform distribution on the unit disk, by selecting a random eigenvalue, but a possibility of producing a cloud of $N$ repelling points, covering the unit disk uniformly, by selecting all of them (see Figure 2).

The repulsion phenomena implies that stronger notions of convergence happen in the random matrix framework (see Figure 1). For simplicity, we will refer to this phenomenon in general as ``superconvergence" (although, strictly speaking, this term was used by Bercovici and Voiculescu for describing some analytic regularization and spectral stability of operators produced by the free convolution).

\begin{figure}\label{fig:circlaw}
\includegraphics[height=4.5cm]{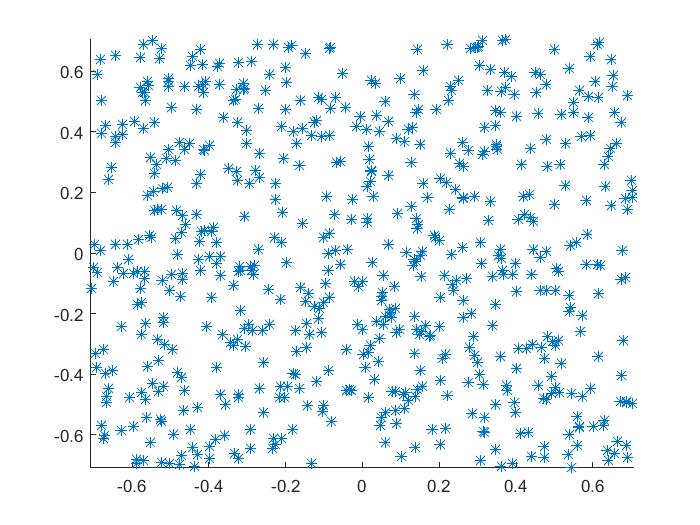}\includegraphics[height=4.5cm]{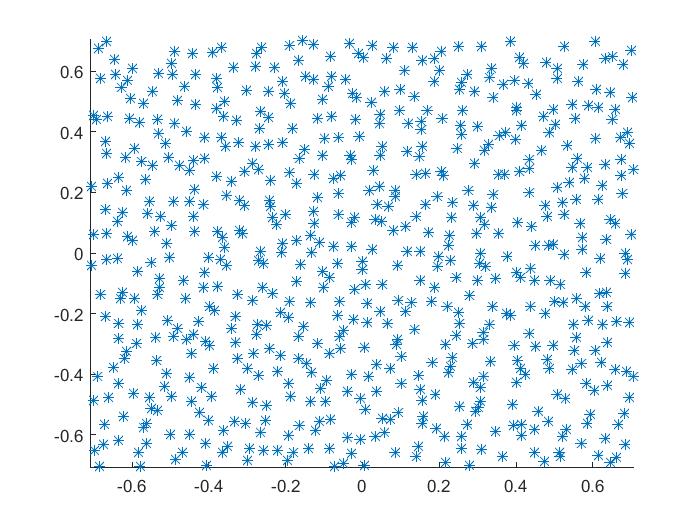}
\caption{Uniform independent samples (left) vs uniform repulsive eigenvalues of $C_{1000}$ (right)}
\end{figure}

This will be later combined with methods for transferring uniform distribution from the unit disk to a parametrized surface. For example, for the acceptance-rejection method \cite{DHS13}, the repulsion of the points on the disk will be inherited after the points are mapped to the surface.
We will use these matrix models to construct toy models with repulsion in TDA (see Section \ref{subs:repulsionTDA}). For dimension $d=1$, we may use the same idea with the eigenvalues of the polar part $U_N$, which are distributed uniformly with repulsion on the unit circle.

\subsection{Asymptotic $\cB$-freeness and the free probability pipeline}\label{subsection:OVFP}

Voiculescu's theorem describes the asymptotic traces of products of matrices in general position. However, the assumption of general position will be soon broken as we depart from the classical models of Wigner and Marchenko-Pastur to more sophisticated models.

For example, we may simply combine some independent $N\times N$ non-self-adjoint matrices $C_1,C_2,C_3,C_4,C_5,C_6$ and form block $3N\times 3N$ random matrices $$C=\left(
\begin{array}{ccc}
C_1&C_2 & C_3\\
C_2&C_1 & C_1\\
C_3&C_1 & C_2
\end{array}\right),\quad C'=\left(
\begin{array}{ccc}
C_4&C_4 & C_5\\
C_4&C_5 & C_6\\
C_5&C_6 & C_6
\end{array}\right).$$

Since some of the entries of $C$ and $C'$ are no longer independent, the limiting distribution of $C+C^*$ is no longer a semicircle law and the matrices $C,C'$ are no longer asymptotically free. However, by Thm. \ref{Th:Afree} we know that the blocks $C_1,C_2,C_3,C_4,C_5,C_6$ converge in distribution to free circular elements $c_1,c_2,\dots,c_6$.

In view of Remark \ref{rem:indeps} (3), for $A_1=\langle c_1,c_2,c_3,c_1^*, c_2^*, c_3^* \rangle$ and $A_2=\langle c_1,c_2,c_3,c_1^*, c_2^*, c_3^* \rangle$, Shlyakthenko noticed in \cite{Shl96} that $C, C'$ are free over $\cB=M_3(\CC)$, and possibly over a smaller sub-algebra, depending on the patterns in which we insert the blocks, and he derived the fixed point equations to compute the distributions.

This illustrates quite well a general pipeline for computing distributions in asymptotic random matrix theory. A random matrix model will usually be constructed by several pieces, as above (block-matrix) or by evaluating some non-commutative polynomial on these matrices. For example, the matrices $C_N$ and $D_N$ are the pieces of the Marchenko-Pastur model and these pieces are evaluated on the polynomial $P(x,y,y^*)=yxy^*$. 

Then the model $a$ (or a suitable modification $a'$) is shown to be free over a certain algebra and the cumulant computations in order to describe the distribution of the model take place at that operator-valued level. In particular, one usually computes the $\cB$-valued Cauchy transform $G_a^{\cB}(b):\FF((b-a)^{-1})$ and the desired scalar-valued transform is a linear algebraic function of it (for example, the normalized trace $G_a(z)=\frac{1}{N}\mathrm{Tr}\circ G_a^{\cB}(z)$ or the upper corner $G_a(z)=(G_{a'}^{\cB}(z))_{11}$)

Probably the most impressive practical application of free probability to random matrices is the algorithm designed by Belinschi et al. \cite{BMS15} to tackle arbitrary polynomials on free random variables. 

Hence, operator-valued freeness extends the applicability of the theory quite a lot. The algorithmic problems in TDA and free probability have some similarities. They are both related to the problem of finding statistics about diagonalizations of large matrices (the Smith normal form on one side and the Jordan normal form or the singular value decomposition on the other side). 
However, even with the flexibility gained with operator-valued free probability, the dependence on the entries of the random boundary/incidence matrices appearing in TDA are still of a completely different nature.

Nevertheless, we should point out similarities with Patels result, which is kind of operator-valued in the sense described above, that the M\"obius inversions take place at multivariate levels of  homology groups. This analogy should later be complemented with an appropriate translation of the methods in computational algebraic topology in terms of non-commutative probability.

In the last section, we point out some interesting problems in non-commutative probability which could give some more understanding of the persistence phenomena and its applications.

\section{Future work}

In this last section, we discuss some problems which are relevant for developing new connections between non-commutative probability and topological data analysis.

\subsection{Superconvergence for toy models with repulsion}\label{subs:repulsionTDA}

For a nice topological space $X$ of dimension $d=1,2$, in the sense that we are able to describe it by a regular parametrization, it is possible to produce point clouds such that the distribution of a random point of the cloud is uniform on $X$, but such that the different points in the cloud repel each other when considered simultaneously. For loops or intervals, one may simply cut the unit circle with the eigenvalues of a Haar-distributed unitary matrix $U_N=C_N(C_NC_N^*)^{-1/2}$ and paste it along a 1-dimensional face in a topological space (edge or loop).

For dimension $d=2$ we may consider the repulsive eigenvalues on the unit disk, generated by the non-self-adjoint Gaussian matrix model $C_N$. We may input such points on the acceptance rejection method see \cite{DHS13}, which yields an uniform distribution on a parametrized surface (see Figure 3).

\begin{figure}\label{fig:750torus}
\includegraphics[height=4cm]{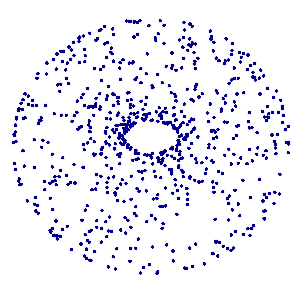}\includegraphics[height=4cm]{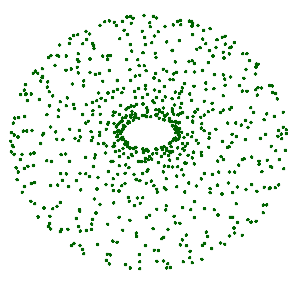}

\caption{Mapping 750 points with uniform distribution on the torus. Independent (left) and with repulsion (right).}
\end{figure}

The repulsion on the input data gets transferred to the accepted data, mapped on the surface. The repulsion imbues the point clouds and hence the associated filtrations of complexes with additional regularity, which makes the convergence of the Betti curves much faster than in the ``tensor" toy model (specially for homologies of dimension $1$ and $2$, which are more sensitive to regularity, see Figures 4 and 5).

\begin{figure}\label{fig:Bettibarcodes}
\includegraphics[height=4cm]{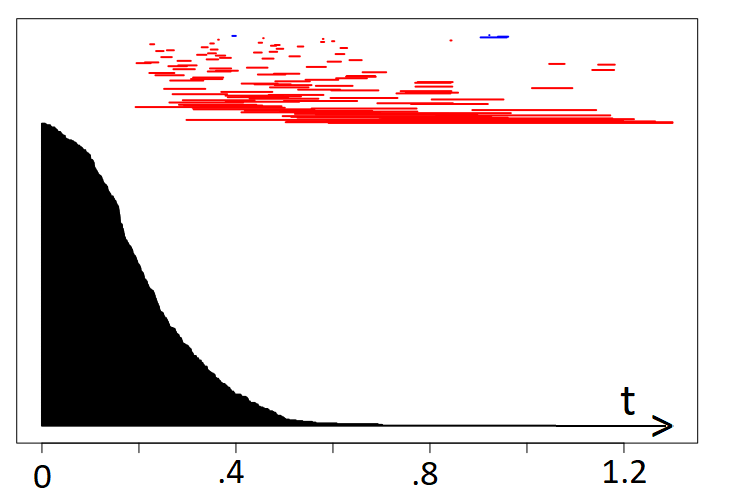}\includegraphics[height=4cm]{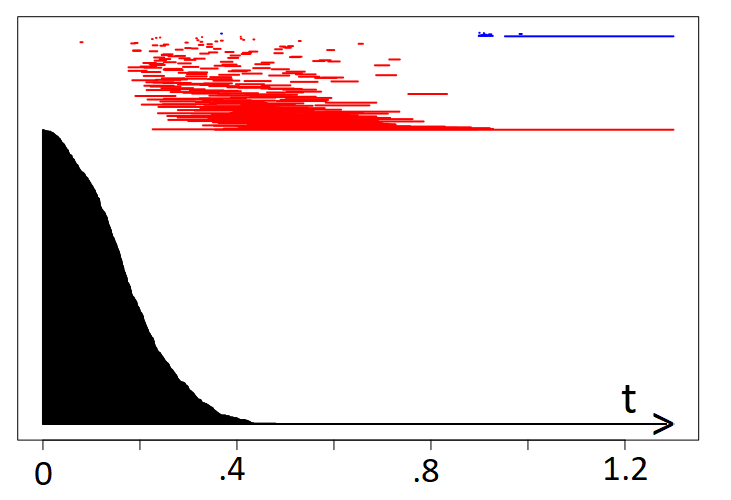}

\includegraphics[height=4cm]{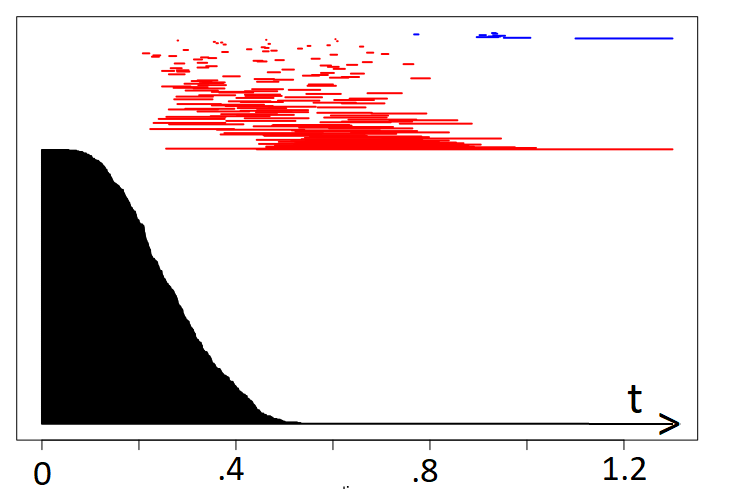}\includegraphics[height=4cm]{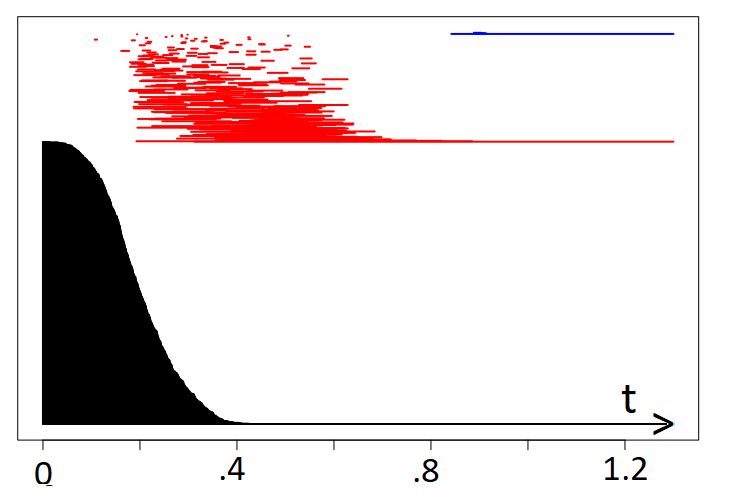}

\caption{Comparison between Betti barcodes for 400/750 (left/right) points on the torus for $H_0$ (black), $H_1$ (red) and $H_2$  (blue). For independent points (above) and points with repulsion (below).}
\end{figure}

Geometric models with repulsion have been considered already in stochastic topology. Very often, data is given by point clouds with coordinates intentionally arranged into evenly spaced grids (as for the Mexican masks). The geometrical repulsion is usually produced by considering the tensor model and simply dismissing points which have fallen too close to a previous point\footnote{I learned about this during a mini-course of M. Kahle at CIMAT}. 

\begin{figure}\label{fig:persistencediag}

\includegraphics[height=4cm]{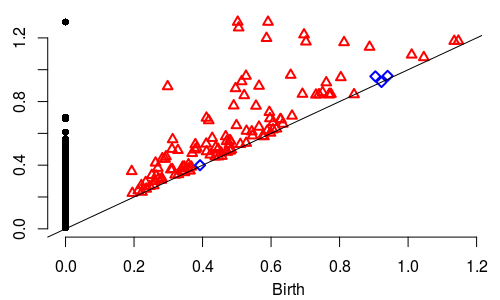}\includegraphics[height=4cm]{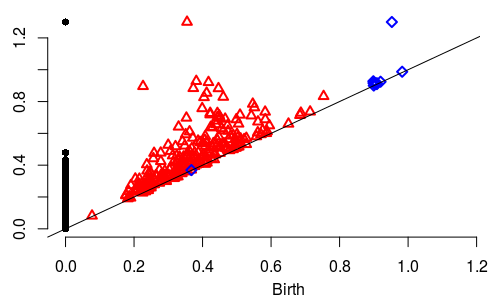}

\includegraphics[height=4cm]{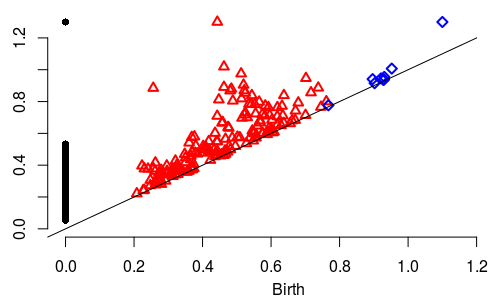}\includegraphics[height=4cm]{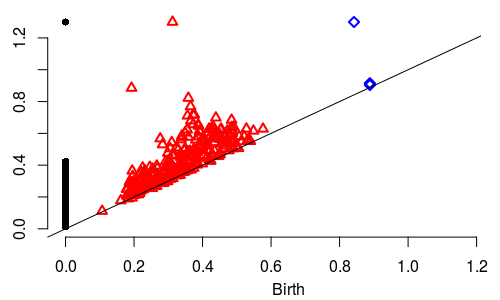}

\caption{Persistence diagrams for 400/750 (left/right) points on the torus for $H_0$ (black), $H_1$ (red) and $H_2$  (blue). For independent points (above) and points with repulsion (below).}
\end{figure}

Hopefully, some of the rich symmetries enjoyed by models produced in terms of eigenvalues of random matrices will lead to better estimates for the asymptotic regimes relevant in TDA. On the downside, it is not clear how to produce such models for $d\geq 3$.

\subsection{Changing the Field}

In Section 3, we extracted the Betti numbers of a simplicial complex by computing the spectra of the blocks of the combinatorial Laplace operator $\mathcal L:=JJ^*+J^*J$.

However, the Betti numbers can be obtained directly from $J$, by computing the Smith normal forms of all the boundary operators $\partial_i$. The Smith normal form of an $n\times m$ matrix $A$ with entries in a principal ideal domain $R$, is a decomposition $A=SDT$, where $S\in M_n(R)$ and $T\in M_m(R)$ are invertible matrices and $D$ is an $n\times m$ matrix of the form 
$$\left(
\begin{array}{cc}
\tilde D&0 \\
0&0 
\end{array}\right),$$
where $\tilde D=\mathrm{diag}(d_1,d_2,\dots,d_k)$ has non-zero diagonal entries, such that $d_1|d_2|\dots |d_k$. Canonical choices for the PID are field of two elements $\FF_2$ and $\mathbb Z$ (or $\CC$).

Since $S$ and $T$ are invertible, the rank $k$ of $D$ coincides with the rank of $A$. For boundary matrices $\partial_i=S_iD_iT_i$, the rank $k_i$ does not depend on the choice of the principal ideal domain $R\in \{\FF_2,\mathbb Z\}$. In fact, we have that, for all $i\leq d$ $$\beta_i(X)=n_i-k_i-k_{i-1}$$

For $R=\mathbb Z$ the coefficients $d_1^{(i)},d_2^{(i)},\dots,d_{k_i}^{(i)}$ of the Smith normal form of $\partial_i$ are the torsion coefficients of the $i$-th homology group. If we use $R=\mathbb F_2$, the coefficients are all ones and hence this information is lost. However, by restricting to coefficients in $\FF_2$ one gains access to methods for sparse matrices in order to compute Smith normal forms more effectively. 

The fact that we may change the field is quite unprecedented in non-commutative probability theory (where we are used to work with $\CC$), even though the theory rests quite fundamentally on multiplicative functions on incidence algebras, which have in other instances quite prominent arithmetic consequences (when moment generating series are replaced by L-series and set partitions by the poset of natural numbers with division). 

No concrete applications of non-commutative probability to L-series have been found yet. However, we should point out the following remarks:

(1). The fundamental arithmetic operation $(a,b):=\max \{d: d|a,b\}=:ab/[a,b]$ is captured by elementary means in non-commutative probability theory. A $k$-Haar unitary (i.e. a random variable with uniform probabilities on the $k$-th roots of the unit) is a standard Gaussian random variables in the Boolean sense, since all its Boolean cumulant vanish, except $B_k(u_k,\dots,u_k)=1$. The product of \emph{classically independent} $k$-Haar unitaries follows the arithmetic rule $u_ku_l=u_{[k,l]}$. In particular, independent $k$ and $l$-Haar unitaries are multiplicative if $k$ and $l$ are co-prime.

(2). Important numbers in arithmetic, such as the Bernoulli numbers, begin to appear in the cumulant-to-cumulant formulae (see \cite{AHLV15}) when we consider non-symmetric notions of independence, such as the monotone independence (see \cite{Mur01, HS11}).

(3). Interval and non-crossing partitions have nice arithmetic features. In particular, the sub-posets of $k$-divisible partitions may be considered and their M\"obius theory can be calculated (see \cite{Ari12}). These have been used in order to compute moments of products of free random variables \cite{AV12}.

We expect more important applications of these posets and their multiplicative cumulant theory, possibly by relaxing/modifying some axioms on the algebraic products of \cite{BGS02}, to be able to consider notions of independence more directly in terms of $k$-nary products (and not necessarily iterating binary associative products).

(4). And, of course, at a much higher level of complexity, there is the very challenging problem of understanding the remarkable statistical coincidences, first observed by Montgomery, between the eigenvalues of Haar-distributed unitary matrices $U_N=C_N(C_NC_N^*)^{-1/2}$, which are central in free probability, and the Riemann zeta function (see \cite{Dia03} for a survey).

\subsection{Star products of graphs and Boolean cumulants, revisited}

As we saw from the star products of graphs, Boolean cumulants share properties with the Betti numbers. In particular, they are additive with respect to ``star" products of simplicial complexes (i.e. identifications of two simplicial complexes by a selected simplex). 

Boolean cumulants filter cycles in a graph according to a local topological property. In view of Section 4, we conjecture that the Betti numbers can be obtained as cumulats with respect to a suitable conditional expectation (for example, onto an algebra determined in some way by the incidence/boundary matrix).

A first step would be to compare the Boolean cumulants and the first Betti number. A second step, to define Boolean cumulants for higher dimensions, using appropriate partitions (\cite{GMV17}). 

The potential advantage of factoring Betti numbers into Boolean cumulants is that they actually sort cycles by combinatorial length (i.e. the number of edges in the cycle). Large cycles are usually persistent and hence, persistence homology would quite transparently correspond to simply ignoring the first few cumulants.

More generally, it would be interesting to filter cycles according to more topological properties in terms of cumulants. For example, since we already know how to filter cycles which are not simple at the vertex $v_i$ by considering the Boolean cumulants w.r.t. $\tau_{ii}$, a cleaver combination of the $\tau_{ii}$'s (possibly in the algebraic framework of traffics, were cumulants somehow model logic gates) should filter simple curves.

\subsection{Betti forests instead of Betti curves}

The different-notions of non-commutative independence are combinatorially interrelated in different ways, but quite universally described by means of forests and trees related to partitions (see \cite{AHLV15}). Forests may display topological data in a better way than curves. 

By using forests, we may obtain quite some more geometrical information about the different clusters in a point cloud which would otherwise make them hard to distinguish. Think of five big clusters in a data cloud, such that, eventually, only points close the one big cluster are added.  Just by looking at the $0$-th Betti curve, it will be hard to distinguish this situation from the situation where more points are added independently to all the five clusters. 

Forest-codes of Betti numbers would allow us also to differentiate the situations:
\begin{itemize}
\item There is a loop at $t_0$ and then at $t_1>t_0$ it splits into two as we add an intermediate edge.
\item There is a loop at $t_0$ and then at $t_1>t_0$ a second loop appears anywhere else in the space.
\end{itemize}
This can not be done by simply looking at the Betti curve of dimension $1$. 

Analogously (dualy), forest-codes could distinguish the situations: 
\begin{itemize}
\item Two non-homologous loops at $t_0$ are made homologous at $t_1$ by adding the $2$-dimensional faces that separated them.
\item There are two non-homologous loops at $t_0$ and one of them gets filled up at $t_1$ by $2$-dimensional faces and becomes trivial.
\end{itemize}

There are currently no methods for computing such forest codes with the impressive efficiencies that are achieved for Betti curves. Theoretically, however, there is some recent work in this direction, by using chiral merge trees \cite{Cur17} (in the same framework as Patel's inversion formula).

\end{document}